\documentclass[a4paper,12pt]{amsart}
\pagestyle{plain}

\usepackage{verbatim}
\usepackage{mathtools}
\usepackage{amsmath}
\usepackage{enumitem}
\usepackage{cite}
\usepackage{hyperref}
\setenumerate[1]{label=(\alph{*}), ref=(\alph{*})}
\setenumerate[2]{label=(\roman{*}), ref=(\roman{*})}

\newtheorem{theorem}{Theorem}[section]
\newtheorem{lemma}[theorem]{Lemma}
\newtheorem{proposition}[theorem]{Proposition}
\newtheorem{corollary}[theorem]{Corollary}
\newtheorem{ques}[theorem]{Question}

\theoremstyle{definition}

\theoremstyle{remark}
\newtheorem{remark}[theorem]{Remark}

\numberwithin{equation}{section}

\def\R{{\mathbb R}}

\newcommand{\tn}{|\mspace{-1mu}|\mspace{-1mu}|}
\DeclareMathOperator{\linspan}{span}

\DeclareMathOperator{\supp}{supp}
\DeclareMathOperator{\sign}{sign}
\newcommand{\HB}{\text{H{\kern -0.35em}B}}
\DeclareMathOperator{\co}{co}
\DeclareMathOperator{\ext}{ext}
\DeclareMathOperator{\diam}{diam}

\newcommand{\xast}{x^{\ast}}

\newcommand{\yast}{y^{\ast}}
\newcommand{\Yast}{Y^{\ast}}
\newcommand{\zast}{z^{\ast}}
\newcommand{\Zast}{Z^{\ast}}
\newcommand{\Xast}{X^{\ast}}

\newcommand{\Xastast}{X^{\ast\ast}}



\begin{document}

\title{On Daugavet indices of thickness}
\date{}

\author{Rainis Haller, Johann Langemets, Vegard Lima, Rihhard Nadel, and Abraham Rueda Zoca}
\address{Institute of Mathematics, University of Tartu, Narva mnt 18, 51009, Tartu, Estonia}
\email{rainis.haller@ut.ee, johann.langemets@ut.ee, rihhard.nadel@ut.ee}

\urladdr{\url{https://johannlangemets.wordpress.com/}}

\address[V.~Lima]{Department of Engineering Sciences, University of Agder,
Postboks 509, 4898 Grimstad, Norway.}
\email{Vegard.Lima@uia.no}

\thanks{This work was supported by the Estonian Research Council grants (PRG877) and (PSG487). R. Nadel was supported in part by the University of Tartu Foundation's CWT Estonia travel scholarship. A.~Rueda Zoca is supported by MICINN (Spain) Grant PGC2018-093794-B-I00 (MCIU, AEI, FEDER, UE), by Junta de Andaluc\'ia Grant A-FQM-484-UGR18 and by Junta de Andaluc\'ia Grant FQM-0185.}

\address[A.~Rueda Zoca]{Universidad de Granada, Facultad de Ciencias.
Departamento de An\'{a}lisis Matem\'{a}tico, 18071-Granada
(Spain)} 
\email{ abrahamrueda@ugr.es}
\urladdr{\url{https://arzenglish.wordpress.com}}

\subjclass[2010]{Primary 46B20, 46B22}

\keywords{Daugavet property, thickness, diameter 2 property, slice}

\date{}

\dedicatory{}

\begin{abstract}
Inspired by R. Whitley's thickness index the last named author recently introduced the Daugavet index of thickness of Banach
spaces. We continue the investigation of the behavior of this index and also consider two new versions of the Daugavet index of
thickness, which helps us solve an open problem which connect the
Daugavet indices with the Daugavet equation. Moreover, we will improve the formerly known estimates
of the behavior of Daugavet index on direct sums of Banach spaces by establishing sharp bounds. As a consequence of our results we prove that,
for every $0<\delta<2$, there exists a Banach space where the
infimum of the diameter of convex combinations of slices of the unit
ball is exactly $\delta$, solving an open question from the
literature.
Finally, we prove that an open question posed by Ivakhno in 2006
about the relation between the radius and diameter
of slices has a negative answer.
\end{abstract}

\maketitle

\section{Introduction}\label{sec: Introduction}

Let $X$ be a real Banach space. To measure quantitatively how far $X$
is from having the Daugavet property, the last named author introduced
in \cite{Rueda} a parameter $\mathcal{T}(X)$, called the Daugavet
index of thickness of $X$ (for the regular index of thickness see
\cite{Whitley}), where
\begin{equation*}
  \mathcal{T}(X) = \inf\left\lbrace r>0 \;\middle|\;
  \begin{tabular}{@{}l@{}}
    \text{ there exist $x\in S_X$ and a relatively weakly }\\ \text{ open $W$ in $B_X$ such that} $\emptyset\neq W \subset B(x,r)$
   \end{tabular}
  \right\rbrace.
\end{equation*}
Notice that $0\leq\mathcal T(X)\leq 2$ for any Banach space $X$ and,
for example, $\mathcal T(\ell_1)=0$, $\mathcal T(c_0)=\mathcal
T(\ell_\infty)=1$, and $\mathcal T(C[0,1])=2$ \cite[Example
4.3]{Rueda}. In fact, $\mathcal T(X)=2$ holds if and only if $X$ has
the Daugavet property \cite[Lemmata~2 and 3]{MR1784413}. Clearly,
$\mathcal T(X)=0$ for $X$ with the Radon--Nikod\'{y}m property. The
converse does not hold in general, because there exists a Banach space
$X$, where every slice of $B_X$ has diameter two (and therefore $X$
does not have the Radon--Nikod\'{y}m property), but with arbitrarily
small nonempty relatively weakly open subsets of $B_X$ (hence
$\mathcal T(X)=0$) \cite[Theorem~2.4]{MR3334951}.

Clearly, a slice of the unit ball is relatively weakly open. On the
other hand, by Bourgain's lemma \cite[Lemma~II.1]{MR912637}, every
nonempty relatively weakly open subset of the unit ball contains a
convex combination of slices. Moreover, there exists a Banach space
such that every nonempty relatively weakly open subset has diameter
two, but it also contains convex combination of slices with
arbitrarily small diameter \cite[Theorem~2.5]{MR3281132}.

The previous two examples motivate us to study further the index
$\mathcal{T}(\cdot)$ and to introduce two new related Daugavet
indices, which are in general not equal:
\begin{equation*}
  \mathcal{T}^s(X) = \inf\left\lbrace r>0 \;\middle|\;
  \begin{tabular}{@{}l@{}}
    \text{ there exist $x\in S_X$ and a slice $S$ of $B_X$}\\ \text{ such that} $S \subset B(x,r)$
   \end{tabular}
  \right\rbrace
\end{equation*}
and
\begin{equation*}
  \mathcal{T}^{cc}(X) = \inf\left\lbrace r>0 \;\middle|\;
  \begin{tabular}{@{}l@{}}
    \text{ there exist $x\in S_X$ and a convex  }\\ \text{ combination $C$ of relatively weakly open }\\ \text{ subsets of $B_X$ such that} $\emptyset\neq C \subset B(x,r)$
   \end{tabular}
  \right\rbrace.
\end{equation*}

\begin{remark}
If one replaces relatively weakly open subsets of $B_X$ in the definition of $\mathcal{T}^{cc}(X)$ with slices of $B_X$, then, by Bourgain's lemma, the index remains unchanged.
\end{remark}

Observe that 
\begin{equation*}\label{eq: indices}
0\leq \mathcal{T}^{cc}(X)\leq  \mathcal{T}(X)\leq \mathcal{T}^s(X)\leq   2.
\end{equation*}

Moreover, if in a Banach space $X$ every slice (respectively, nonempty relatively weakly open subset; convex combination of nonempty relatively weakly open subsets) of $B_X$ has diameter two, then $\mathcal{T}^s(X)\geq 1$ (respectively, $\mathcal{T}(X)\geq 1$; $\mathcal{T}^{cc}(X)\geq 1$). 

It is known that $\mathcal{T}^{cc}(X)=\mathcal{T}(X)= \mathcal{T}^s(X)=2$ if and only if $X$ has the Daugavet property. This is immediate from the following result. 
\begin{proposition}[see {\cite[Lemmata~2 and 3]{MR1784413}}]
\label{prop: equivalent Daugavet}
 Let $X$ be a Banach space. The following assertions are equivalent:
 \begin{itemize}
     \item[(i)] $X$ has the Daugavet property;
     \item[(ii)] For every $x\in S_X$, every $\varepsilon >0$, and every slice $S$ of $B_X$ there exists $y\in S$ such that $\|x-y\|\geq 2-\varepsilon$;
     \item[(iii)] For every $x\in S_X$, every $\varepsilon >0$, and every nonempty relatively weakly open subset $W$ of $B_X$ there exists $y\in W$ such that $\|x-y\|\geq 2-\varepsilon$;
     \item[(iv)] For every $\varepsilon >0$,
  every $x \in S_X$ and every convex combination $C$
  of nonempty relatively weakly open subsets of $B_X$,
  there exists $y \in C$ such that
  $\|x - y\| > 2 - \varepsilon$.
  %
\end{itemize}
\end{proposition}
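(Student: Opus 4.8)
The plan is to establish the cycle of implications (iv)$\,\Rightarrow\,$(iii)$\,\Rightarrow\,$(ii)$\,\Rightarrow\,$(i) together with (i)$\,\Rightarrow\,$(iv), so that all four conditions become equivalent. The implications (iv)$\,\Rightarrow\,$(iii) and (iii)$\,\Rightarrow\,$(ii) are immediate: every slice is a nonempty relatively weakly open subset of $B_X$, and every such subset $W$ is itself the degenerate convex combination $C=W$ of a single weakly open set, so the strict estimate in (iv) yields the non-strict ones in (iii) and (ii). The equivalence (i)$\,\Leftrightarrow\,$(ii) is the classical slice description of the Daugavet property, for which I would simply invoke \cite{MR1784413}. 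Thus the only substantial point is (ii)$\,\Rightarrow\,$(iv), equivalently (i)$\,\Rightarrow\,$(iv).

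For this, I would first reduce to slices. Given a convex combination $C=\sum_i\lambda_i W_i$ of nonempty relatively weakly open sets, Bourgain's lemma produces inside each $W_i$ a convex combination of slices, and substituting these shows that $C$ contains a convex combination of slices $\sum_{k=1}^{m}\nu_k S_k$; this is exactly the content of the Remark. Hence it suffices to show that for every $x\in S_X$ and $\varepsilon>0$ there are $y_k\in S_k$ with $\|x+\sum_k\nu_k y_k\|>2-\varepsilon$ (applying this to $-x\in S_X$ then gives the form with $\|x-\sum_k\nu_k y_k\|$ required in (iv)). The advantage of the ``$+$'' formulation is that it asks for a single functional that is simultaneously close to $1$ on $x$ and on each $y_k$.

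The heart of the argument is an induction on $m$ that manufactures such a common functional. I would carry along a normalized ``average'' unit vector $u_k$ encoding the vectors chosen so far, with the property that any $f\in S_{X^{\ast}}$ with $f(u_k)$ close to $1$ automatically has $f(x)$ and all $f(y_1),\dots,f(y_k)$ close to $1$. Starting from $u_0=x$, at step $k+1$ I apply (ii) to the unit vector $-u_k$ and the slice $S_{k+1}$ to get $y_{k+1}\in S_{k+1}$ with $\|u_k+y_{k+1}\|>2-\eta$; a norming functional $f_{k+1}$ of $u_k+y_{k+1}$ then satisfies $f_{k+1}(u_k)>1-\eta$ and $f_{k+1}(y_{k+1})>1-\eta$, so by the encoding property $f_{k+1}$ is close to $1$ on $x,y_1,\dots,y_{k+1}$. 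I then set $u_{k+1}$ to be the normalization of $\tfrac{1}{k+2}(x+y_1+\cdots+y_{k+1})$, whose norm is close to $1$ precisely because $f_{k+1}$ is close to $1$ on each summand. After $m$ steps the functional $f:=f_m$ obeys $f(x)>1-\varepsilon'$ and $f(y_k)>1-\varepsilon'$ for all $k$, whence $f(x+\sum_k\nu_k y_k)>2-2\varepsilon'$ and the desired estimate follows for a suitable choice of tolerances.

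The main obstacle is precisely this construction of a single separating functional. Applying (ii) slice by slice yields a different norming functional for each slice, and no one of them controls the whole convex combination, since $\|x+\sum_k\nu_k y_k\|$ becomes large only when one functional is near $1$ on every $y_k$ at once. The encoding vector $u_k$ is what forces the successive functionals to stay compatible; the price is that the errors accumulate with each step, so I would track the tolerances carefully (taking the step tolerance $\eta$ of order $\varepsilon/m^2$, say) and use the finiteness of $m$ to keep the total error below $\varepsilon$.
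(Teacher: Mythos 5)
The paper gives no proof of this proposition at all; it is quoted directly from Shvydkoy \cite{MR1784413}, so there is nothing in the text to compare against except that citation. Your argument is correct and is essentially a reconstruction of the standard proof of Lemma~3 there: reducing (iv) to convex combinations of slices via Bourgain's lemma and then iteratively manufacturing one functional that nearly norms $x$ and all the chosen $y_k$ simultaneously is exactly the established route. The only point to adjust is quantitative: with your uniform averages $u_k$ the admissible error in the encoding property degrades by a factor of roughly $k+1$ at each step, so the step tolerance $\eta$ must be taken of order $\varepsilon/(m+1)!$ rather than $\varepsilon/m^2$; since $m$ is fixed this is harmless, and your own caveat about tracking tolerances already covers it.
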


Examples of Banach spaces with the Daugavet property include $\mathcal C(K,X)$ (resp. $L_1(\mu,X)$ and $L_\infty(\mu,X)$), regardless $X$, when $K$ does not have any isolated points (resp. $\mu$ does not contain any atom) \cite{wer}, the $\ell_1$-sum and the $\ell_\infty$-sum of two Banach spaces with the Daugavet property \cite{woj} or $C[0,1]\widehat{\otimes}_\pi C[0,1]$ \cite[Theorem 1.2]{rtv}.

Of course, in general one has
\[
  \mathcal{T}^s(X) \leq \inf\left\lbrace r>0 \;\middle|\;
  \begin{tabular}{@{}l@{}}
    \text{ there exist a slice $S$ of $B_X$ and $x\in S\cap S_X$}\\ \text{ such that} $S \subset B(x,r)$
   \end{tabular}
  \right\rbrace
\]
and 
\[
  \mathcal{T}(X) \leq \inf\left\lbrace r>0 \;\middle|\;
  \begin{tabular}{@{}l@{}}
    \text{ there exist a relatively weakly open $W$ in $B_X$}\\ \text{ and $x\in W\cap S_X$ such that} $W \subset B(x,r)$
   \end{tabular}
  \right\rbrace.
\]
However, notice that both of these inequalities can be strict. For example, if $X=C[0,1]\oplus_2 C[0,1]$, then both right hand side
inequalities are 2, but $\mathcal
T(X)\leq \mathcal T^s(X)<2$. This
happens for any Banach space $X$, which fails the Daugavet property, but has the diametral diameter two property (see the definition in \cite{MR3818544}).  

In Section \ref{section:directsum} we carry out a systematic study of
Daugavet indices of thickness in direct sums of Banach spaces. We
establish sharp bounds on all of the indices in $\ell_p$-sums (see
Theorem~\ref{thm: T p}), which improve the known upper estimates from
\cite{Rueda}. As an application, we prove that for each $r\in [0,2]$
there exists a Banach space $X$ such that $\mathcal T^s(X)=\mathcal
T(X)=\mathcal T^{cc}(X)=r$ (see Theorem
\ref{theorem:consindex[0,2]}). For the proof, we make use of
Proposition \ref{prop:polyhedralexample}, which allows us to solve in
Corollary \ref{corollary:polyhedral} an open question from \cite{hln}
(see the Remark after Theorem~2.8).

In Section \ref{section:remarks} we answer negatively a question posed
by the last named author in \cite[Problem~5.3]{Rueda}. Also, we will
discuss the relation of $\mathcal{T}^s(\cdot)$ between isomorphic
Banach spaces (see Proposition~\ref{T^cc closed BM-dist}), which is
then applied to prove that the Daugavet property is closed with
respect to the Banach--Mazur distance.

We end the paper by giving a negative answer to a question of Ivakhno
from 2006 \cite[p.~96]{Ivakhno}. If in a Banach space $X$ every slice
of $B_X$ has diameter two, then every slice has radius one, that is,
$X$ has the $r$-big slice property (see Section \ref{section:remarks}
for details). Ivakhno asked whether the converse is always true. We
show in Theorem~\ref{theo:JTinfty} that there exists a Banach space
$X$ such that $X$ has the $r$-big slice property, but the unit ball
contains slices of diameter smaller than $\sqrt{2}+\varepsilon$ for
every $\varepsilon>0$, which answers the above mentioned question negatively.

All Banach spaces considered in this paper are nontrivial and over
the real field. The closed unit ball of a Banach space $X$ is denoted
by $B_X$ and its unit sphere by $S_X$. The dual space of $X$ is
denoted by $X^\ast$ and the bidual by $X^{\ast\ast}$.

By a \emph{slice} of $B_X$ we mean a set of the form
\begin{equation*}
S(B_X, x^*,\alpha) :=
\{
x \in B_X : x^*(x) > 1 - \alpha
\},
\end{equation*}
where $x^* \in S_{X^*}$ and $\alpha > 0$.
A finite convex combination of slices is then of the form 
\[
\sum_{i=1}^n\lambda_i S(B_X, \xast_i, \alpha_i),
\]
where $n\in \mathbb N$ and $\lambda_i\in [0,1]$ such that $\sum_{i=1}^n\lambda_i=1$.

We recall that a norm $N$ on $\mathbb R^2$ is called \emph{absolute} (see \cite{NMR2}) if
\[
N(a,b)=N(|a|,|b|)\qquad\text{for all $(a,b)\in\mathbb R^2$}
\]
and \emph{normalized} if
\[
N(1,0)=N(0,1)=1.
\]
For example, the $\ell_p$-norm $\|\cdot\|_p$ is absolute and normalized for every $p\in[1,\infty]$. If $N$ is an absolute normalized norm on $\mathbb R^2$ (see \cite[Lemmata~21.1 and 21.2]{NMR2}), then 

\begin{itemize}
\item $\|(a,b)\|_\infty\leq N(a,b)\leq \|(a,b)\|_1$ for all $(a,b)\in\mathbb R^2$;
\item if $(a,b),(c,d)\in\mathbb R^2$ with $|a|\leq |c|\quad \text{and}\quad |b|\leq |d|,$ then \[N(a,b)\leq N(c,d);\]
\item the dual norm $N^\ast$ on $\mathbb \R^2$ defined by 
\[
N^\ast(c,d)=\max_{N(a,b)\leq 1}(|ac|+|bd|) \qquad\text{for all $(c,d)\in\mathbb R^2$}
\]
is also absolute and normalized. Note that $(N^\ast)^\ast=N$.
\end{itemize}

If $X$ and $Y$ are Banach spaces and $N$ is an absolute normalized norm on $\R^2$, then we denote by $X\oplus_N Y$ the product space $X\times Y$ with respect to the norm
\[
\|(x,y)\|_N=N(\|x\|,\|y\|) \qquad\text{for all $x\in X$ and $y\in Y$}.
\]
In the special case where $N$ is the $\ell_p$-norm, we write $X\oplus_p Y$.
Note that $(X\oplus_N Y)^\ast=\Xast\oplus_{N^\ast} \Yast$.

\section{Daugavet indices in direct sums}\label{section:directsum}

We start by recalling a result for the index $\mathcal{T}(\cdot)$ from \cite{Rueda}.
\begin{proposition}[see {\cite[Proposition~4.5]{Rueda}}]\label{prop: estimates of T}
 Let $X$ and $Y$ be Banach spaces. Then
 \begin{enumerate}
     \item\label{it: a} $\mathcal{T}(X\oplus_1 Y)\leq \min\{\mathcal{T}(X), \mathcal{T}(Y)\}$;
     \item\label{it: b} $\mathcal{T}(X\oplus_p Y)\leq (\frac{(2^{1/p}+1)^p}{2})^{1/p}$ for every $1<p<\infty$;
     \item $\mathcal T(X\oplus_\infty Y)\geq
       \min\{\mathcal T(X),\mathcal T(Y)\}$,
       where equality holds if $\mathcal T(X\oplus_\infty Y)>1$.
 \end{enumerate}
\end{proposition}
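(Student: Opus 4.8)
The three parts call for three different constructions, so I would treat them separately and keep the hardest estimate for part (2).

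For part (1) it suffices, by symmetry, to prove $\mathcal{T}(X\oplus_1 Y)\le \mathcal{T}(X)$. Fix $r>\mathcal{T}(X)$ and choose $x_0\in S_X$ together with a nonempty relatively weakly open $W\subseteq B_X$ with $W\subseteq B(x_0,r)$. The whole point of the $\ell_1$-sum is that if I force the $X$-coordinate to have norm close to $1$, then the constraint $\|x\|+\|y\|\le 1$ automatically squeezes $\|y\|$ to $0$. So I would first pick $w_1\in W$ with $\|w_1\|$ as close to $1$ as I like: such a point exists because, when $\dim X=\infty$, a basic neighborhood $\{x:|x_i^*(x-w_0)|<\varepsilon\}\cap B_X\subseteq W$ contains the whole affine piece $w_0+\bigcap_i\ker x_i^*$, which is unbounded and hence crosses $S_X$ inside $W$ (the case $\dim X<\infty$ is trivial, since then $\mathcal{T}(X)=0$ and $W$ may be taken near $x_0$). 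Choosing $x_0^*\in S_{X^*}$ almost norming $w_1$, I would set, in $X\oplus_1 Y$, the anchor $z_0=(x_0,0)\in S_{X\oplus_1 Y}$ and $\tilde{W}=\{(x,y)\in B_{X\oplus_1 Y}: x\in W,\ x_0^*(x)>1-\delta\}$, which is relatively weakly open and nonempty as it contains $(w_1,0)$. For $(x,y)\in\tilde{W}$ one has $\|x\|\ge x_0^*(x)>1-\delta$, hence $\|y\|\le 1-\|x\|<\delta$, while $x\in W$ gives $\|x-x_0\|<r$; thus $\|(x,y)-z_0\|_1<r+\delta$. Letting $\delta\to 0$ and $r\downarrow\mathcal{T}(X)$ yields the bound. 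The only delicate point is the nonemptiness of $\tilde{W}$, i.e. the existence of near-sphere points inside $W$, which is exactly the fact quoted above.

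For part (2) the mechanism is the strict convexity of the $\ell_p$-norm between the two factors. Fix $x_0\in S_X$, $y_0\in S_Y$ and norming functionals $x_0^*\in S_{X^*}$, $y_0^*\in S_{Y^*}$. The plan is to intersect $B_{X\oplus_p Y}$ with slices built from $(x_0^*,0)$ and $(0,y_0^*)$ so as to pin the coordinate norms near prescribed values $s,t$ with $s^p+t^p=1$: a lower bound $x_0^*(x)>s-\delta$ forces $\|x\|\ge s-\delta$, and, combined with the ambient inequality $\|x\|^p+\|y\|^p\le 1$ together with a matching lower bound $\|y\|\ge t-\delta$, this sandwiches $\|x\|\to s$ and $\|y\|\to t$. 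With the anchor placed on the unit sphere at $(\alpha x_0,\beta y_0)$, $\alpha^p+\beta^p=1$, I would estimate the radius coordinatewise by the triangle inequality, the worst displacement in each factor being at most its coordinate norm plus the matching anchor component. Optimizing the free parameters $s,t,\alpha,\beta$ is where the constant $\big(\frac{(2^{1/p}+1)^p}{2}\big)^{1/p}=1+2^{-1/p}$ is produced. I expect this optimization — arranged so that the estimate is uniform over all $X,Y$ and, in particular, beats the trivial value $2$ coming from the failure of the Daugavet property in $\ell_p$-sums — to be the main obstacle of the whole proposition.

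For part (3) I would prove the two inequalities separately. For $\mathcal{T}(X\oplus_\infty Y)\ge\min\{\mathcal{T}(X),\mathcal{T}(Y)\}$, fix $r<\min\{\mathcal{T}(X),\mathcal{T}(Y)\}$ and any anchor $z_0=(x_0,y_0)\in S_{X\oplus_\infty Y}$; since $\|z_0\|_\infty=\max\{\|x_0\|,\|y_0\|\}=1$, at least one coordinate, say $x_0$, lies in $S_X$. Any nonempty relatively weakly open $\tilde{W}\subseteq B_{X\oplus_\infty Y}$ contains, at a fixed second coordinate $y'$, a set $\{(x,y'):x\in W\}$ with $W$ nonempty relatively weakly open in $B_X$; because $r<\mathcal{T}(X)$ we have $W\not\subseteq B(x_0,r)$, so some $(x_1,y')\in\tilde{W}$ satisfies $\|(x_1,y')-z_0\|_\infty\ge\|x_1-x_0\|\ge r$, whence $\tilde{W}\not\subseteq B(z_0,r)$. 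As $z_0,\tilde{W}$ were arbitrary, $\mathcal{T}(X\oplus_\infty Y)\ge r$, and $r\uparrow\min\{\mathcal{T}(X),\mathcal{T}(Y)\}$ gives the lower bound. For the conditional equality, assume without loss of generality $\mathcal{T}(X)\le\mathcal{T}(Y)$; taking $z_0=(x_0,0)$ with $x_0\in S_X$ and $\tilde{W}=\{(x,y)\in B_{X\oplus_\infty Y}:x\in W\}$, where $W\subseteq B(x_0,r)$ and $r>\mathcal{T}(X)$, the second coordinate being left unconstrained, gives for every $(x,y)\in\tilde{W}$ the estimate $\|(x,y)-z_0\|_\infty=\max\{\|x-x_0\|,\|y\|\}\le\max\{r,1\}$, since $\|y\|\le 1$. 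Hence $\mathcal{T}(X\oplus_\infty Y)\le\max\{\mathcal{T}(X),1\}$, and whenever the left-hand side exceeds $1$ this forces $\mathcal{T}(X)>1$ and therefore equality with $\min\{\mathcal{T}(X),\mathcal{T}(Y)\}=\mathcal{T}(X)$. The one subtlety is the role of the hypothesis $\mathcal{T}(X\oplus_\infty Y)>1$: it is precisely what upgrades the clean bound $\max\{\mathcal{T}(X),1\}$ to $\mathcal{T}(X)$.
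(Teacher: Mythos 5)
The paper itself never proves this proposition --- it is recalled verbatim from \cite{Rueda} --- so the only in-paper baseline is the pair of analogous results it does prove, Propositions \ref{prop: estimates of Ts} and \ref{prop: estimates of Tcc}. Measured against those, your parts (a) and (c) are correct. In (a), the auxiliary slice $x_0^*(x)>1-\delta$ anchored at a point of $W\cap S_X$ is precisely the extra idea needed for general weakly open sets (for slices the $\ell_1$-squeeze is automatic, which is how the paper handles Proposition \ref{prop: estimates of Tcc}(a)), and your inline argument that nonempty relatively weakly open subsets of $B_X$ meet $S_X$ when $\dim X=\infty$ is sound; the finite-dimensional aside also works, since there the norm ball $W=\{x\in B_X:\|x-x_0\|<\delta\}$ is relatively weakly open and already forces $\|y\|<\delta$ in the lift. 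Your part (c) --- the unconditional bound $\mathcal{T}(X\oplus_\infty Y)\le\max\{\min\{\mathcal{T}(X),\mathcal{T}(Y)\},1\}$ via a lift with free second coordinate, plus the fixed-second-coordinate restriction for the lower bound --- is exactly the mechanism of the paper's own proofs of Propositions \ref{prop: estimates of Ts}(c) and \ref{prop: estimates of Tcc}(c).

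Part (b), however, is a genuine gap, and not only because you stop short of the computation. Carry your scheme out: with windows pinning $\|x\|\approx s$, $\|y\|\approx t$ (where $s^p+t^p=1$), anchor $(\alpha x_0,\beta y_0)$ (where $\alpha^p+\beta^p=1$), and the coordinatewise estimates $\|x-\alpha x_0\|\le\|x\|+\alpha$ and $\|y-\beta y_0\|\le\|y\|+\beta$, the quantity to minimize is $(s+\alpha)^p+(t+\beta)^p$. Superadditivity of $u\mapsto u^p$ gives $(s+\alpha)^p+(t+\beta)^p\ge (s^p+t^p)+(\alpha^p+\beta^p)=2$, with equality only at a corner such as $s=\beta=1$, $t=\alpha=0$: pin $\|x\|\approx 1$ (hence $\|y\|\approx 0$) and anchor at $(0,y_0)$. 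So the optimal output of your construction is $2^{1/p}$, i.e.\ the paper's Proposition \ref{prop: upperbound}, not the stated constant $\left(\frac{(2^{1/p}+1)^p}{2}\right)^{1/p}=1+2^{-1/p}$.

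Since $2^{1/p}\le 1+2^{-1/p}$ holds exactly when $2^{1/p}\le (1+\sqrt{5})/2$, i.e.\ $p\ge \ln 2/\ln\frac{1+\sqrt{5}}{2}\approx 1.44$, your route does establish (b) in that range but leaves $1<p<1.44$ open --- and you should not expect to close it. For such $p$ one has $1+2^{-1/p}<2^{1/p}$, while the paper's Theorem \ref{thm: T p} (whose lower bound, Proposition \ref{LowerP}, is proved in full) gives $\mathcal{T}(X\oplus_p Y)=2^{1/p}$ whenever $X$ and $Y$ have the Daugavet property. Hence the bound in (b), as printed, is incompatible with Theorem \ref{thm: T p} for $p$ close to $1$; the constant appears to be mistranscribed, and the statement your argument honestly proves --- and the one the paper actually uses later --- is $\mathcal{T}(X\oplus_p Y)\le 2^{1/p}$. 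Your instinct that the optimization was ``the main obstacle'' was therefore right for an unexpected reason: no admissible choice of $s,t,\alpha,\beta$ produces $1+2^{-1/p}$.
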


Now we provide a lower estimate for $\mathcal{T}(X \oplus_p Y)$, where $1< p<\infty$.

\begin{proposition}\label{LowerP}
  Let $X$ and $Y$ be Banach spaces, $N$ be an absolute normalized norm on $\R^2$, and $\gamma>0$ is such that $N(\cdot) \geq \gamma \|\cdot \|_1$. Then
  \begin{enumerate}
  \item 
$\mathcal{T}^s(X\oplus_N Y) \ge 2\gamma\big(\min\{\mathcal{T}^s(X), \mathcal{T}^s(Y)\}-1\big)$;
  \item 
$\mathcal{T}(X\oplus_N Y) \ge 2\gamma\big(\min\{\mathcal{T}(X), \mathcal{T}(Y)\}-1\big)$;
  \item
$\mathcal{T}^{cc}(X\oplus_N Y) \ge 2\gamma\big(\min\{\mathcal{T}^{cc}(X), \mathcal{T}^{cc}(Y)\}-1\big)$.
  \end{enumerate}
  In particular, $\mathcal{T}(X \oplus_p Y) \ge 2^{1/p}\big(\min\{\mathcal{T}(X), \mathcal{T}(Y)\}-1\big)$ whenever $1<p<\infty$.
\end{proposition}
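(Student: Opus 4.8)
The plan is to establish the three inequalities by one common construction, carried out in detail for $\mathcal{T}^s$, and then to indicate the modifications for $\mathcal{T}$ and $\mathcal{T}^{cc}$. Set $m=\min\{\mathcal{T}^s(X),\mathcal{T}^s(Y)\}$. If $m\le 1$ the right-hand side of (a) is $\le 0$ and there is nothing to prove, so I assume $m>1$. It is convenient to read the hypothesis $\mathcal{T}^s(X)\ge m$ as follows: for every $w\in S_X$, every slice $\sigma$ of $B_X$ and every $\varepsilon>0$ there is $y\in\sigma$ with $\|y-w\|\ge m-\varepsilon$ (and similarly in $Y$). Fix $(u,v)\in S_{X\oplus_N Y}$ and a slice $S=S(B_{X\oplus_N Y},(f,g),\alpha)$, so that $N^\ast(\|f\|,\|g\|)=1$ because $(X\oplus_N Y)^\ast=X^\ast\oplus_{N^\ast}Y^\ast$. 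It suffices to produce one point of $S$ at distance at least $2\gamma(m-1)$ from $(u,v)$.

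For the construction, fix $a_0,b_0\ge 0$ with $N(a_0,b_0)=1$ and $a_0\|f\|+b_0\|g\|=1$, i.e.\ a maximiser in the formula for $N^\ast(\|f\|,\|g\|)$. Apply the hypothesis to the factor slices $\sigma_X=S(B_X,f/\|f\|,\beta)$ and $\sigma_Y=S(B_Y,g/\|g\|,\beta)$, with the sphere points $u/\|u\|$ and $v/\|v\|$ (or arbitrary sphere points if $u$ or $v$ is zero), to obtain $y_1\in\sigma_X$ and $y_2\in\sigma_Y$ that are $(m-\varepsilon)$-far from these points. Crucially, membership in a slice gives $\|y_i\|\ge 1-\beta$. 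Set $z=(a_0y_1,b_0y_2)$. Monotonicity of $N$ gives $N(\|a_0y_1\|,\|b_0y_2\|)\le N(a_0,b_0)=1$, so $z\in B_{X\oplus_N Y}$, while $f(a_0y_1)+g(b_0y_2)>(1-\beta)(a_0\|f\|+b_0\|g\|)=1-\beta$, so $z\in S$ as soon as $\beta\le\alpha$.

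The heart of the matter is the distance estimate for $\|z-(u,v)\|_N=N(D_1,D_2)$, where $D_1=\|u-a_0y_1\|$, $D_2=\|v-b_0y_2\|$ and $p=\|u\|$, $q=\|v\|$. Passing to $N(D_1,D_2)\ge\gamma(D_1+D_2)$ is harmless, but one must then bound the $D_i$ by the right inequality in each regime; the naive reverse triangle inequality alone is too weak. Two families of bounds are needed. From $u-a_0y_1=a_0(u/\|u\|-y_1)+(p-a_0)\,u/\|u\|$ one gets the directional bound $D_1\ge a_0(m-\varepsilon)-|p-a_0|$; and from the triangle inequality, using $\|y_1\|\ge 1-\beta$, one gets $D_1\ge a_0(1-\beta)-p$ and $D_1\ge p-a_0$ (symmetrically for $D_2$). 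The first bound is the effective one when $p$ is comparable to $a_0$, the second when $p$ is small—exactly the situation that defeats the directional bound. Choosing in each coordinate the best available bound and using that $(a_0,b_0)$ and $(p,q)$ both lie on the positive part of the unit sphere of $N$ (whence, in the nondegenerate case, exactly one of $p\le a_0$, $q\le b_0$ holds), a case analysis yields $D_1+D_2\ge 2(m-1)-O(\varepsilon+\beta)$, and letting $\varepsilon,\beta\to0$ proves (a). I expect this case analysis—engineering the elementary bounds so that they combine to the clean value $2(m-1)$ uniformly over all positions of $(a_0,b_0),(p,q)$, and disposing of the degenerate cases $u=0$, $v=0$, $f=0$, $g=0$—to be the main obstacle; note in particular that the bound $\|y_i\|\ge 1-\beta$ coming from the slice is indispensable, since dropping it already fails to reach $2\gamma(m-1)$.

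For (b) and (c) the same three steps are used, with $\sigma_X,\sigma_Y$ replaced by nonempty relatively weakly open subsets, respectively by convex combinations of such, extracted from the given set in $B_{X\oplus_N Y}$: since the weak topology on $X\times Y$ is the product of the weak topologies, a basic relatively weakly open set (or a convex combination thereof) in the sum contains a product of sets of the same kind in the two factors, to which $\mathcal{T}(X),\mathcal{T}(Y)$, resp.\ $\mathcal{T}^{cc}(X),\mathcal{T}^{cc}(Y)$, apply, and the scaling by $(a_0,b_0)$ again keeps the resulting point in $B_{X\oplus_N Y}$. The additional difficulty here is that the factor points need no longer have norm close to $1$, so the substitute for the bound $\|y_i\|\ge 1-\beta$ must be supplied by the construction itself; this is the second place where care is required. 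Finally, the $\ell_p$-statement follows by taking $N=\|\cdot\|_p$: the largest admissible constant is $\gamma=\tfrac12\|(1,1)\|_p=2^{1/p-1}$, so $2\gamma=2^{1/p}$, which gives $\mathcal{T}(X\oplus_p Y)\ge 2^{1/p}(\min\{\mathcal{T}(X),\mathcal{T}(Y)\}-1)$.
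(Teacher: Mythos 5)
There is a genuine gap at exactly the point you flag as the main obstacle: the case analysis for $D_1+D_2\ge 2(m-1)-O(\varepsilon+\beta)$ cannot be completed from the three lower bounds you list. Concretely, take $N=\|\cdot\|_2$ (so $\gamma=2^{-1/2}$ and the target is $N(D_1,D_2)\ge\sqrt{2}$ when $m=2$), and the admissible configuration $(a_0,b_0)=(\tfrac12,\tfrac{\sqrt3}{2})$, $(p,q)=(1,0)$, with $\varepsilon=\beta=0$ for simplicity. Your bounds for $D_1$ evaluate to $\tfrac12$, $-\tfrac12$, $\tfrac12$ and those for $D_2$ to $\tfrac{\sqrt3}{2}$, $\tfrac{\sqrt3}{2}$, $-\tfrac{\sqrt3}{2}$, so the best you can certify is $D_1+D_2\ge\tfrac12+\tfrac{\sqrt3}{2}\approx 1.37<2$ and $N(D_1,D_2)\ge 1<\sqrt2$. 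The proposition is still true in this configuration, but seeing that requires a strictly stronger directional estimate than $D_1\ge a_0(m-\varepsilon)-|p-a_0|$. The missing lemma (the key step in the paper's proof) is: if $e,\tilde e\in B_E$ satisfy $\|e+\tilde e\|\ge 1+\alpha$, then $\|\lambda e+\mu\tilde e\|\ge(\lambda+\mu)\alpha$ for all $\lambda,\mu\ge0$; this follows from $(\lambda+\mu)\|e+\tilde e\|=\|\lambda e+\mu\tilde e+\mu e+\lambda\tilde e\|\le\|\lambda e+\mu\tilde e\|+\lambda+\mu$. Applied with $e=u/\|u\|$, $\tilde e=-y_1$, $\lambda=p$, $\mu=a_0$, it gives $D_1\ge(p+a_0)(m-1-\varepsilon)$ (in the example $\tfrac32$ instead of $\tfrac12$), whence $D_1+D_2\ge(p+q+a_0+b_0)(m-1-\varepsilon)\ge 2(m-1-\varepsilon)$, using only $p+q\ge N(p,q)=1$ and $a_0+b_0\ge 1$ --- no case analysis, and no use of $\|y_i\|\ge 1-\beta$. (Your parenthetical claim that in the nondegenerate case exactly one of $p\le a_0$, $q\le b_0$ holds is also false in general.)

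A second, related gap concerns (b) and (c): your scaling pair $(a_0,b_0)$ is extracted from the norming identity for the functional $(f,g)$, which has no analogue for a relatively weakly open set or a convex combination of such sets, and you leave unresolved how to replace the bound $\|y_i\|\ge 1-\beta$ there. The paper's construction sidesteps both issues at once: it anchors the scaling at an arbitrary point $(x_0,y_0)$ of the given set $W$ itself, builds factor neighbourhoods $U\subseteq B_X$, $V\subseteq B_Y$ around $x_0/\|x_0\|$, $y_0/\|y_0\|$ so that $(\|x_0\|u,\|y_0\|v)\in W$ for $u\in U$, $v\in V$, and then applies the lemma above with $\mu=\|x_0\|$, $\mu=\|y_0\|$. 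This works verbatim for slices, weakly open sets and convex combinations, and since the lemma needs no lower bound on $\|u\|,\|v\|$, the ``second place where care is required'' disappears.
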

\begin{proof}
We will only prove (b), because the proofs of (a) and (c) are very similar.

(b). Without loss of generality we assume that $\min \lbrace\mathcal{T}(X), \mathcal{T}(Y)\rbrace>1$ otherwise the lower bound trivially holds. Let $\varepsilon>0$ be such that $\min \lbrace\mathcal{T}(X), \mathcal{T}(Y)\rbrace>1+\varepsilon$.  Denote by $Z := X \oplus_N Y$.
	Let $(\tilde{x},\tilde{y}) \in S_Z$ and
	let $W$ be a nonempty relatively weakly open subset of $B_Z$.
	
	Without loss of generality we may assume that 
	\[
	W=\{z\in B_Z\colon |\zast_i(z)-\zast_i(z_0)|<1,\; i\in\{1,\dots,n\}\},
	\]
	for some $\zast_i=(\xast_i,\yast_i)\in \Zast$ and $z_0=(x_0, y_0)\in S_Z$.
	
	Define now 
	\[
	U:=
	\begin{dcases}
	\{x\in B_X\colon |\xast_i(x)-\xast_i(\frac{x_0}{\|x_0\|})|<\frac{1}{2\|x_0\|},\; i\in\{1,\dots,n\}\}, &\text{ if } x_0 \neq 0,\\
	\{x\in B_X\colon |\xast_i(x)|<\frac{1}{2},\; i\in\{1,\dots,n\}\}, &\text{ if } x_0=0,
	\end{dcases}
	\]
	and
	\[
	V:=\begin{dcases}
	\{y \in B_Y\colon |\yast_i(y)-\yast_i(\frac{y_0}{\|y_0\|})|<\frac{1}{2\|y_0\|},\; i\in\{1,\dots,n\}\}, &\text{ if } y_0 \neq 0,\\
	\{y\in B_Y\colon |\yast_i(y)|<\frac{1}{2},\; i\in\{1,\dots,n\}\}, &\text{ if } y_0=0.
	\end{dcases}
	\]
	From now on we will distinguish two cases.
	
	\textit{Case 1:} Assume first that $\tilde{x}\neq 0$ and $\tilde{y}\neq 0$. Due to the definition of the Daugavet index we can find $u\in U$ and $v\in V$ such that $\|\frac{\tilde{x}}{\|\tilde{x}\|}-u\|\geq \mathcal{T}(X)-\varepsilon/2$ and $
	\|\frac{\tilde{y}}{\|\tilde{y}\|}-v\|\geq \mathcal{T}(Y)-\varepsilon/2.$

	\textbf{Claim.} If two elements $e$ and $\tilde{e}$ in the unit ball
	of a Banach space $E$ satisfy that $\|e + \tilde{e}\| \geq 1 + \alpha$
	for some $\alpha \in [0,1]$, then
	$\|\lambda e + \mu \tilde{e}\| \geq (\lambda + \mu)\alpha$
	for all $\lambda, \mu \geq 0$.
	Indeed, the claim follows from the inequalities
	\begin{equation*}
	(\lambda + \mu)(1 + \alpha)
	\le (\lambda + \mu)\|e+\tilde{e}\|
	\le \|\lambda e+\mu \tilde{e}\| + \lambda + \mu.
	\end{equation*}
	
	Since 
	\[
	\mathcal{T}(X)-\varepsilon/2>1+\varepsilon/2,
	\]
	we can apply the Claim above and get that 
	\[
	\begin{aligned}
	\|\tilde{x}-\|x_0\|u\|&\geq (\|\tilde{x}\| +\|x_0\|)(\mathcal{T}(X)-1-\varepsilon/2).
	\end{aligned}
	\]
	Similarly, one obtains $\|\tilde{y}-\|y_0\|v\|\geq (\|\tilde{y}\|
	+\|y_0\|)(\mathcal{T}(Y)-1-\varepsilon/2).$
	
	Observe that $(\|x_0\|u,\|y_0\|v)\in W$ and
	\begin{align*}
	\|(\tilde{x}, \tilde{y})-(\|x_0\|u,\|y_0\|v)\|_N&= N\Big( \|\tilde{x} - \|x_0\|u\|, \|\tilde{y} - \|y_0\|v\|\Big)\\
	&\ge N\Big((\|\tilde{x}\| +\|x_0\|)(\mathcal{T}(X)-1-\varepsilon/2),\\
	&(\|\tilde{y}\| +\|y_0\|)(\mathcal{T}(Y)-1-\varepsilon/2)\Big)
	\\
	&\geq N\Big(\|\tilde{x}\|+\|x_0\|,\|\tilde{y}\|+\|y_0\| \Big)(\min \lbrace\mathcal{T}(X), \mathcal{T}(Y)\rbrace-1-\varepsilon/2)\\
	& \geq \gamma (\|\tilde{x}\|+\|\tilde{y}\|+\|x_0\|+\|y_0\|)(\min \lbrace\mathcal{T}(X), \mathcal{T}(Y)\rbrace-1-\varepsilon/2) \\
	&\geq 2\gamma(\min \lbrace\mathcal{T}(X), \mathcal{T}(Y)\rbrace-1-\varepsilon/2).
	\end{align*}
	Hence, by the arbitrariness of $\varepsilon$, we conclude that $\mathcal{T}(X \oplus_N Y) \ge 2\gamma(\min \lbrace\mathcal{T}(X), \mathcal{T}(Y)\rbrace-1)$.
	
	\textit{Case 2:} Assume now that $\tilde{y}= 0$, hence $\|\tilde{x}\|=1$ (the case when $\tilde{x}=0$ follows similarly). Now we can find $u\in U$ such that $\|\tilde{x}-u\|\geq \mathcal{T}(X)-\varepsilon/2$ and let $v\in V\cap S_Y$. Again, note that $(\|x_0\|u,\|y_0\|v)\in W$ and
	\begin{align*}
	\|(\tilde{x}, 0)-(\|x_0\|u,\|y_0\|v)\|_N&= N\Big(\|\tilde{x} - \|x_0\|u\| , \|y_0\|\Big)\\
	&\ge N\Big((1 +\|x_0\|)(\mathcal{T}(X)-1-\varepsilon/2), \|y_0\|\Big)
	\\
	&\ge N\Big(1 +\|x_0\|, \dfrac{\|y_0\|}{(\mathcal{T}(X)-1-\varepsilon/2)}\Big)(\mathcal{T}(X)-1-\varepsilon/2)
	\\
	&\geq \gamma(1+\|x_0\|+\|y_0\|)(\mathcal{T}(X)-1-\varepsilon/2)  \\
	&\geq 2\gamma(\mathcal{T}(X)-1-\varepsilon/2),
	\end{align*}
	Again, by the arbitrariness of $\varepsilon$, we conclude that $\mathcal{T}(X \oplus_N Y) \ge 2\gamma(\min \lbrace\mathcal{T}(X), \mathcal{T}(Y)\rbrace-1)$.
\end{proof}

\begin{remark}
With almost identical proof one can generalize Proposition~\ref{LowerP} to a finite direct sum of Banach spaces equipped with an absolute norm and the estimates will remain the same.
\end{remark}

We now turn our attention to the upper estimate of these Daugavet indices of thickness.

\begin{proposition}\label{prop: upperbound}
	Let $X$ and $Y$ be Banach spaces, $N$ be an absolute normalized norm on $\R^2$, and $\Gamma>0$ is such that $N(\cdot) \leq \Gamma \|\cdot \|_\infty$. If $(1,0)$ or $(0,1)$ is an extreme point of $B_{(\R^2,N)}$, then $\mathcal{T}^s(X\oplus_N Y) \le \Gamma$. In particular, $\mathcal{T}^{s}(X \oplus_p Y) \le 2^{1/p}$ whenever $1<p<\infty$.
\end{proposition}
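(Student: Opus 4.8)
The plan is to prove the estimate directly from the definition of $\mathcal{T}^s$: it suffices to show that for every $\eps>0$ there exist a unit vector $z_0\in S_Z$ (with $Z:=X\oplus_N Y$) and a slice $S$ of $B_Z$ with $S\subset B(z_0,\Gamma+\eps)$. By symmetry we may assume that $(0,1)$ is an extreme point of $B_{(\R^2,N)}$; the case when $(1,0)$ is extreme is handled identically after interchanging the roles of $X$ and $Y$. Recall also that since $N$ is normalized and $\|\cdot\|_\infty\le N$, we have $\Gamma\ge 1$.

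The first key step is a compactness lemma turning extremality into a quantitative statement: \emph{if $(0,1)$ is an extreme point of $B_{(\R^2,N)}$, then for every $\delta>0$ there is $\alpha>0$ such that $s,t\ge 0$, $N(s,t)\le 1$ and $t>1-\alpha$ force $s<\delta$.} I would prove this by contradiction: a failing sequence stays in the bounded set $[0,1]^2$ (using $\|\cdot\|_\infty\le N$), so a subsequence converges to some $(s_0,1)$ with $N(s_0,1)\le 1$ and $s_0\ge\delta>0$; monotonicity gives $N(s_0,1)\ge N(0,1)=1$, hence $N(s_0,1)=1$, and then absoluteness yields $N(-s_0,1)=1$, so $(0,1)=\tfrac12(s_0,1)+\tfrac12(-s_0,1)$ is a nontrivial convex combination of unit-sphere points, contradicting extremality.

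The second, and slightly counterintuitive, step is the construction. Given $\delta>0$ with $\Gamma(1+\delta)<\Gamma+\eps$, take the corresponding $\alpha$ from the lemma, fix $\ys\in S_{\Ys}$, and consider the slice $S:=S(B_Z,(0,\ys),\alpha)=\{(x,y)\in B_Z:\ys(y)>1-\alpha\}$; note $\|(0,\ys)\|_{\Zast}=N^\ast(0,1)=1$, so this is a genuine slice. For $(x,y)\in S$ we have $\|y\|\ge \ys(y)>1-\alpha$ together with $N(\|x\|,\|y\|)\le 1$, so the lemma gives $\|x\|<\delta$. I then center \emph{not} at a point of the slice but at $z_0:=(x_0,0)$ for an arbitrary $x_0\in S_X$, which lies in $S_Z$ because $N(1,0)=1$ (the definition of $\mathcal{T}^s$ permits the centre to lie outside $S$, and this is exactly what is needed). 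For any $w=(x,y)\in S$, using $\|x_0-x\|\le 1+\delta$, $\|y\|\le 1$, monotonicity of $N$, and the hypothesis $N\le\Gamma\|\cdot\|_\infty$,
\[
\|z_0-w\|_N=N\big(\|x_0-x\|,\|y\|\big)\le N(1+\delta,1)\le \Gamma(1+\delta)<\Gamma+\eps,
\]
so $S\subset B(z_0,\Gamma+\eps)$ and letting $\eps\to 0$ gives $\mathcal{T}^s(Z)\le\Gamma$.

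For the ``in particular'' part I would simply observe that for $1<p<\infty$ the norm $\|\cdot\|_p$ is strictly convex, so $(1,0)$ and $(0,1)$ are extreme points of $B_{\ell_p^2}$, while $\|(a,b)\|_p\le 2^{1/p}\|(a,b)\|_\infty$ shows that $\Gamma=2^{1/p}$ is admissible; the general statement then yields $\mathcal{T}^s(X\oplus_p Y)\le 2^{1/p}$. I expect the main obstacle to be conceptual rather than computational, namely recognizing that one must place the centre $z_0$ in the coordinate \emph{complementary} to the slice (so that the large, possibly diametral, spread of $S$ in the $Y$-direction is absorbed by a difference bounded by $1$ rather than $2$); the extremality hypothesis is precisely what forces the $X$-component of points in $S$ to be small and is therefore essential, as the crude bound without it only gives radius $2\Gamma$.
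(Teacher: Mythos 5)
Your proof is correct and follows essentially the same route as the paper's: the same slice $S(B_Z,(0,\ys),\alpha)$, the same centre $(x_0,0)$ with $x_0\in S_X$, and the same use of extremality of $(0,1)$ to force the $X$-component of points in the slice to be small before applying $N\le\Gamma\|\cdot\|_\infty$. The only difference is cosmetic: you prove the quantitative consequence of extremality by a compactness argument, whereas the paper invokes the (equivalent, in this two-dimensional setting) fact that $(0,1)$ is strongly exposed.
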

\begin{proof}
	Denote by $Z := X\oplus_N Y$ and let $\varepsilon > 0$,
	$x \in S_X$ and $y \in S_Y$.
	Assume that $e=(0,1)$ is an extreme point of $B_{(\R^2,N)}$ (the proof for the other case is similar). Then $e$ is actually a strongly exposed point which allows us to fix a $\delta>0$ such that, whenever $(a,b)\in B_{(\R^2,N)}$ and $b>1-\delta$, then $|a|<\varepsilon$. 
	Find $\yast \in S_{\Yast}$ with $\yast(y) = 1$.
	If $(u,v) \in S(B_Z,(0,\yast)),\delta)$,
	then $\|v\| \ge \yast(v) > 1 - \delta$.
	By our assumption $\|u\| < \varepsilon$. Therefore
	\begin{align*}
	\|(u,v) - (x,0)\|_N
	&= N\Big(\|u-x\|,\|v\|\Big)\\
	&\le N\Big(1+\|u\|,\|v\|\Big)\\
	&\le (1+\|u\|)N(1,1)\\
	&\le \Gamma(1+\varepsilon).
	\end{align*}	
	Since $\varepsilon > 0$ was arbitrary we get
	$\mathcal{T}^s(Z) \le \Gamma$.
\end{proof}

\begin{remark}
One can also generalize Proposition~\ref{prop: upperbound} to a finite direct sum of Banach spaces equipped with an absolute norm and the estimate will remain the same.
\end{remark}

Since $\mathcal T(\cdot)\leq \mathcal T^s(\cdot)$, then the obtained upper bound from Proposition~\ref{prop: upperbound} improves the previously known estimate from Proposition~\ref{prop: estimates of T}~\ref{it: b}  in \cite{Rueda}. Moreover, from Proposition~\ref{LowerP} we know that this estimate is sharp. We summarize this in the following result.

\begin{theorem}\label{thm: T p}
  Let $X$ and $Y$ be Banach spaces and $1 < p < \infty$. If $X$ and $Y$ both have the Daugavet property, then $\mathcal T^s(X\oplus_p Y)=\mathcal T(X\oplus_p Y)=\mathcal T^{cc}(X\oplus_p Y)=2^{1/p}$.
\end{theorem}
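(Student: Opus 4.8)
The plan is to combine the two bounds already established for $\ell_p$-sums and use the characterization of the Daugavet property via the indices. Since $1<p<\infty$, the $\ell_p$-norm $N=\|\cdot\|_p$ is absolute and normalized, so both preceding propositions apply with the explicit constants $\gamma=2^{1/p}/2$ and $\Gamma=2^{1/p}$.

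\textbf{Lower bound.} First I would invoke Proposition~\ref{LowerP}. If $X$ and $Y$ both have the Daugavet property, then $\mathcal{T}^{cc}(X)=\mathcal{T}(X)=\mathcal{T}^s(X)=2$ and likewise for $Y$, so that $\min\{\mathcal{T}^{cc}(X),\mathcal{T}^{cc}(Y)\}=\min\{\mathcal{T}(X),\mathcal{T}(Y)\}=\min\{\mathcal{T}^s(X),\mathcal{T}^s(Y)\}=2$. Applying the three inequalities of Proposition~\ref{LowerP} with $N=\|\cdot\|_p$ and $\gamma=2^{1/p}/2$ (which is the optimal constant in $\|\cdot\|_p\geq\gamma\|\cdot\|_1$ on $\R^2$) gives
\[
\mathcal{T}^{cc}(X\oplus_p Y),\ \mathcal{T}(X\oplus_p Y),\ \mathcal{T}^s(X\oplus_p Y)\ \geq\ 2\gamma(2-1)=2^{1/p}.
\]

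\textbf{Upper bound.} Next I would apply Proposition~\ref{prop: upperbound}. For $N=\|\cdot\|_p$ with $1<p<\infty$, the point $(0,1)$ is an extreme point of the unit ball (the $\ell_p$-ball is strictly convex for $1<p<\infty$), and the smallest constant $\Gamma$ with $\|\cdot\|_p\leq\Gamma\|\cdot\|_\infty$ on $\R^2$ is $\Gamma=2^{1/p}$. Hence $\mathcal{T}^s(X\oplus_p Y)\leq 2^{1/p}$, with no hypothesis on $X,Y$ needed beyond being Banach spaces.

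\textbf{Conclusion.} Finally I would chain these with the universal ordering $\mathcal{T}^{cc}\leq\mathcal{T}\leq\mathcal{T}^s$ recorded in the introduction. The upper bound pins $\mathcal{T}^s(X\oplus_p Y)\leq 2^{1/p}$, while the lower bound forces $\mathcal{T}^{cc}(X\oplus_p Y)\geq 2^{1/p}$; squeezed between them, all three indices equal $2^{1/p}$. I do not anticipate a genuine obstacle here, since the theorem is essentially a corollary assembling the sharp matching bounds; the only points requiring a word of care are the explicit identification of the optimal constants $\gamma=2^{1/p}/2$ and $\Gamma=2^{1/p}$ for the $\ell_p$-norm on $\R^2$, and the (routine) verification that $(0,1)$ is extreme in the strictly convex ball $B_{(\R^2,\|\cdot\|_p)}$ so that Proposition~\ref{prop: upperbound} is applicable.
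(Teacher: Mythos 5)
Your proposal is correct and follows essentially the same route as the paper: the authors likewise combine Proposition~\ref{LowerP} (applied to $\mathcal{T}^{cc}$ with $\min\{\mathcal{T}^{cc}(X),\mathcal{T}^{cc}(Y)\}=2$ from Proposition~\ref{prop: equivalent Daugavet}) with Proposition~\ref{prop: upperbound}, and then squeeze via the chain $2^{1/p}\leq \mathcal{T}^{cc}(X\oplus_p Y)\leq \mathcal{T}(X\oplus_p Y)\leq \mathcal{T}^{s}(X\oplus_p Y)\leq 2^{1/p}$. Your explicit verification of the optimal constants $\gamma=2^{1/p}/2$ and $\Gamma=2^{1/p}$ and of the extremality of $(0,1)$ in the strictly convex $\ell_p$-ball is correct and only makes explicit what the paper absorbs into the ``in particular'' clauses of those propositions.
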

\begin{proof}
Since $X$ and $Y$ have the Daugavet property, then $\mathcal T^{cc}(X)=\mathcal T^{cc}(Y)=2$ by Proposition~\ref{prop: equivalent Daugavet}. From Proposition~\ref{LowerP} we get that $\mathcal T^{cc}(X\oplus_p Y)\geq 2^{1/p}$ and by Proposition~\ref{prop: upperbound} we have that $\mathcal T^{s}(X\oplus_p Y)\leq 2^{1/p}$, therefore
\[
2^{1/p}\leq \mathcal T^{cc}(X\oplus_p Y)\leq \mathcal T(X\oplus_p Y)\leq \mathcal T^{s}(X\oplus_p Y)\leq 2^{1/p},
\]
which completes the proof.
\end{proof}

As a consequence of Theorem~\ref{thm: T p} we get the following result.

\begin{theorem}\label{theorem:consindex[0,2]}
For every $r\in [0,2]$ there exists a Banach space $X$ such that $\mathcal{T}^s(X)=\mathcal{T}(X)=\mathcal T^{cc}(X)=r$.
\end{theorem}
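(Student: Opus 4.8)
The plan is to realize each value $r\in[0,2]$ by exhibiting an explicit space, splitting the interval into the regime $(1,2]$, where the $\ell_p$-sum machinery of Theorem~\ref{thm: T p} applies directly, and the regime $[0,1]$, where a different construction is needed.

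For $r\in(1,2)$ I would set $p=\frac{\ln 2}{\ln r}$, so that $p\in(1,\infty)$ and $2^{1/p}=r$, and take $X=C[0,1]\oplus_p C[0,1]$. Since $C[0,1]$ has the Daugavet property, Theorem~\ref{thm: T p} yields $\mathcal{T}^s(X)=\mathcal{T}(X)=\mathcal{T}^{cc}(X)=2^{1/p}=r$. For $r=2$ I would take $X=C[0,1]$ itself: having the Daugavet property, all three indices equal $2$ by Proposition~\ref{prop: equivalent Daugavet}. This settles $(1,2]$.

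For the endpoints of the lower interval, $r=0$ is handled by $X=\ell_1$: the slice $S(B_{\ell_1},e_1^\ast,\alpha)$ is contained in $B(e_1,2\alpha)$ with $e_1\in S_{\ell_1}$, so $\mathcal{T}^s(\ell_1)\le 2\alpha$ for every $\alpha>0$, whence $0\le\mathcal{T}^{cc}\le\mathcal{T}\le\mathcal{T}^s(\ell_1)=0$. The value $r=1$ is handled by $X=c_0$: one knows $\mathcal{T}(c_0)=1$, and every convex combination of slices of $B_{c_0}$ has diameter $2$, so no such combination lies in a ball of radius less than $1$, giving $\mathcal{T}^{cc}(c_0)\ge 1$; on the other hand the coordinate slice $\{z\in B_{c_0}:z_1>1-\alpha\}$ lies in $B(e_1,1)$, so $\mathcal{T}^s(c_0)\le 1$, and all three indices equal $1$.

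The genuinely delicate case is $r\in(0,1)$, and I expect this to be the main obstacle. Here direct sums of Daugavet spaces are useless: for an absolute normalized norm $N$ with $(1,0)$ or $(0,1)$ extreme, Propositions~\ref{LowerP} and~\ref{prop: upperbound} only bound the indices of a sum of two Daugavet spaces from above by $N(1,1)$, and since $N(1,1)\ge\|(1,1)\|_\infty=1$ these tools cannot certify any value in $(0,1)$. Thus a space realizing $r\in(0,1)$ must be \emph{uniformly thick at a fractional level}: every slice near the sphere must have diameter strictly between $0$ and $2$, with no escape to either extreme (a single strongly exposed direction would already force the index to $0$, while a flat Daugavet-type direction would force it above $1$). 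I would obtain such spaces from the polyhedral construction of Proposition~\ref{prop:polyhedralexample}, whose unit ball is engineered so that the infimum of the diameters of convex combinations of slices takes a prescribed value while those combinations remain a controlled distance from $S_X$; tuning this parameter makes $\mathcal{T}^s$, $\mathcal{T}$ and $\mathcal{T}^{cc}$ coincide at the desired $r$. The hard part is verifying that the three indices agree for this family, in particular establishing the lower bound on $\mathcal{T}^{cc}$, which is exactly where the polyhedral example does its work.
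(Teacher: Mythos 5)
Your proposal is correct and follows essentially the same route as the paper: $\ell_1$, $c_0$, $C[0,1]$ for $r=0,1,2$; $C[0,1]\oplus_p C[0,1]$ with $2^{1/p}=r$ via Theorem~\ref{thm: T p} for $r\in(1,2)$; and Proposition~\ref{prop:polyhedralexample} with $s=\frac{r}{1-r}$ (so that $\frac{s}{1+s}=r$) for $r\in(0,1)$. The extra verifications you supply for $\ell_1$ and $c_0$ are accurate but not needed, since those values are already recorded in the introduction.
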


For the proof we will need the following result.

\begin{proposition}\label{prop:polyhedralexample}
  For every $r>0$ there exists a Banach space $X$ such that
  \begin{equation*}
    \mathcal T^{cc}(X)=\mathcal T(X)=\mathcal T^s(X)=\frac{r}{1+r}.
  \end{equation*}
\end{proposition}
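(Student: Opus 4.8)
The goal is to construct, for each $r>0$, a Banach space with all three Daugavet indices equal to $\frac{r}{1+r}$. The natural strategy is to build an explicit polyhedral example where the unit ball is the convex hull of finitely many (or countably many, scaled) points, so that slices, weakly open sets, and convex combinations of slices can all be controlled by elementary finite-dimensional geometry. Since the target value $\frac{r}{1+r}$ lies in $(0,1)$ and increases to $1$ as $r\to\infty$, and we already have the building blocks from Theorem~\ref{thm: T p} producing values in $[1,2]$, this proposition is meant to cover the low range; the polyhedral nature suggests the example should have small weakly open sets but possibly nontrivial slice behavior.

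\medskip

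The plan is to exhibit a concrete space $X$, most likely a renorming of $c_0$ or $\ell_1$ (or a space $C(K)$-like or a polytope space in finite dimensions taken in a suitable limit), whose unit ball has a distinguished vertex $x\in S_X$ near which one can trap a whole slice. First I would identify a point $x\in S_X$ and a functional $x^*\in S_{X^*}$ so that the slice $S(B_X,x^*,\alpha)$ is contained in a ball $B(x,r')$ with $r'$ as close as desired to $\frac{r}{1+r}$; this yields the upper bound $\mathcal T^s(X)\le\frac{r}{1+r}$. Because $\mathcal T^{cc}\le\mathcal T\le\mathcal T^s$, it then suffices to prove the matching \emph{lower} bound for $\mathcal T^{cc}$, namely that for \emph{every} $x\in S_X$ and every convex combination $C$ of relatively weakly open subsets of $B_X$ there is a point of $C$ at distance at least $\frac{r}{1+r}-\varepsilon$ from $x$. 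To do this I would use Proposition~\ref{prop: equivalent Daugavet}-style reasoning in reverse: show that the geometry forces points of any convex combination of slices to stay a definite distance away from any fixed unit vector.

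\medskip

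The key computational step is to set up the parameter $r$ explicitly in the norm. A clean candidate is to take the unit ball to be (the closed convex hull of) the points $\{\pm e_n\}$ together with points like $\frac{1}{1+r}(e_i + r\,e_j)$ or some symmetric family, chosen so that the diameter of the relevant slices and weakly open sets is exactly $\frac{2r}{1+r}$ while the ``thickness radius'' is $\frac{r}{1+r}$. I would compute, for a concrete functional, the distance from the trapping vertex to the farthest point of the slice, verify it equals $\frac{r}{1+r}$, and then argue via Bourgain's lemma (using the Remark after the definition of $\mathcal T^{cc}$, which lets me replace weakly open sets by slices in the $\mathcal T^{cc}$ definition) that no convex combination of slices can be squeezed into a smaller ball around any unit vector. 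The lower bound argument should reduce to a finite-dimensional convexity estimate: averaging points from several slices cannot decrease the distance to the fixed unit vector below the threshold, by a supporting-functional / extreme-point analysis on the polytope.

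\medskip

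The main obstacle I anticipate is the lower bound $\mathcal T^{cc}(X)\ge\frac{r}{1+r}$, since $\mathcal T^{cc}$ is the smallest of the three indices and convex combinations of slices can be far smaller in diameter than individual slices. The delicate point is to ensure that the example has small weakly open subsets (so that $\mathcal T^{s}$ is genuinely below $1$ and the three indices can coincide at a value $<1$) while still preventing any convex combination of slices from being absorbed into too small a ball about a unit vector. I expect to control this by choosing the generating points of $B_X$ with a rigid symmetry, so that every extreme point of $B_X$ is at distance exactly $\frac{2r}{1+r}$ from a ``large'' portion of the sphere, and then to push an $\varepsilon$-argument through the finitely many slices in the convex combination. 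Verifying that the three indices collapse to the \emph{same} value $\frac{r}{1+r}$ — rather than leaving a gap $\mathcal T^{cc}<\mathcal T^{s}$ — is the crux, and it will hinge on the precise polytope chosen.
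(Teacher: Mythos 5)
Your high-level architecture does match the paper's: a polyhedral renorming built over $c_0$ (the paper works on $c_0\oplus\mathbb R$), one explicit slice trapped in a small ball to get $\mathcal T^s(X)\le\frac{r}{1+r}$, and a matching lower bound for $\mathcal T^{cc}$, which via $\mathcal T^{cc}\le\mathcal T\le\mathcal T^s$ closes the argument. But the proposal stops at the plan: no space is actually constructed and neither bound is verified. The one concrete candidate you float (the convex hull of $\{\pm e_n\}$ together with points like $\frac{1}{1+r}(e_i+re_j)$) is neither pinned down nor checked. The paper instead specifies the \emph{dual} ball, $U^*:=\co\bigl(B_{\ell_1\oplus_\infty\mathbb R}\cup\{(0,\pm(1+r))\}\bigr)$, and equips $c_0\oplus\mathbb R$ with the predual norm; the functional $(0,1+r)$ then traps the slice $S(B_X,(0,1+r),\delta)$ inside $B\bigl((0,\tfrac{1}{1+r}),\tfrac{r+2\delta}{1+r}\bigr)$ --- exactly the computation you defer.

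The essential missing idea is the mechanism behind the lower bound. You propose to show that points of a convex combination stay a definite distance from any fixed unit vector, via an unspecified ``supporting-functional / extreme-point analysis.'' The paper proves the cleaner and stronger statement that \emph{every} convex combination of slices $C$ satisfies $\diam(C)\ge\frac{2r}{1+r}$, so $C$ cannot sit in any ball of radius less than $\frac{r}{1+r}$, whatever its center. The engine of that diameter bound is the structural property $\ext(B_{X^*})'\subseteq\frac{1}{1+r}B_{X^*}$ (the weak$^*$ derived set of the extreme points of the dual ball is small): by compactness only finitely many extreme functionals $f$ satisfy $\vert f(x_i)\vert>\frac{1}{1+r}+\varepsilon$, so one can pick $y\in S_X$ annihilating all of them together with the slice functionals, and then $x_i\pm(\frac{r}{1+r}-\varepsilon)y$ remains in the $i$-th slice; the two resulting convex combinations are $2(\frac{r}{1+r}-\varepsilon)$ apart. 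Without isolating this property of the dual ball (or an equivalent one), the lower bound has no concrete content --- and this is precisely the point you yourself flag as the crux and leave unresolved, so the proposal as written does not yet constitute a proof.
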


\begin{proof}
  The proof is inspired by the example exhibited in
  \cite[Theorem~2.1]{loru}.
  Pick an arbitrary $r>0$. Define
  \begin{equation*}
    U^* := \co\left(
      B_{\ell_1\oplus_\infty\mathbb R}
      \cup
      \left\{ \left(0,1+r\right), \left(0,-1-r\right) \right\}
    \right),
  \end{equation*}
  which is clearly a weak$^*$ compact set in
  $(c_0\oplus_1\mathbb{R})^*$.
  Consequently, there is a norm $\tn\cdot\tn$ on
  $c_0\oplus \mathbb{R}$ whose unit ball is
  \begin{equation*}
    U := \{ (x,\beta) \in c_0\oplus \mathbb R:
    \phi(x,\beta)\leq 1\ \mbox{ for all } \phi\in U^*\}.
  \end{equation*}
  Consider $X:=(c_0\oplus\mathbb R, \tn\cdot\tn)$, and let us prove
  that $X$ satisfies the desired requirements. It is clear that
  $U^*=B_{X^*}$. Also, it is clear that
  \begin{equation}\label{exam:polyextrem}
    \ext(U^*)
    =
    \left\{
      \left( 0,\pm\left(1+r\right) \right)
    \right\}
    \cup
    \{(\xi e_n,\psi 1) :
    n\in\mathbb{N} \mbox{ and } \xi,\psi\in\{-1,1\}
    \}.
  \end{equation}
  Therefore, $\ext(U^*)'=\{(0,\pm 1)\}\subseteq \frac{1}{1+r}U^*$.

  Note also that $B_{\ell_1\oplus_\infty\mathbb R} \subseteq
  U^*\subseteq (1+r)B_{\ell_1\oplus_\infty\mathbb R}$, so
  \begin{equation*}
    \frac{1}{1+r}B_{c_0\oplus_1\mathbb R}
    \subseteq U
    \subseteq B_{c_0\oplus_1\mathbb R}.
  \end{equation*}
  Consequently, for each pair $(x,\beta)\in X$, it follows that
  \begin{equation}\label{exam:desiteonorma}
    \Vert (x,\beta)\Vert_1
    \leq
    \tn(x,\beta)\tn
    \leq (1+r)\Vert (x,\beta)\Vert_1.
  \end{equation}
  Define $S_\delta:=S(B_X,(0,1+r),\delta)$.
  Pick an element $(x,\beta)\in S_\delta$,
  and let us estimate $\tn(x,\beta)-(0,\frac{1}{1+r})\tn$.
  To this end, notice that
  \begin{equation*}
    1 \geq (1+r) \beta = (0,1+r)(x,\beta) > 1-\delta
    \Rightarrow
    \frac{1}{1+r} \geq \beta \geq \frac{1-\delta}{1+r}.
  \end{equation*}
  We claim that $\Vert x\Vert_\infty\leq \frac{r+\delta}{1+r}$.
  Assume for contradiction that there exists
  $n\in\mathbb{N}$
  such that $\vert x(n)\vert > \frac{r+\delta}{1+r}$,
  choose $\xi := \sign(x(n))$ and define
  $x^* := (\xi e_n,1) \in U^*\subseteq B_{X^*}$.
  Then
  \begin{equation*}
    1 \geq x^*((x,\beta))
    = \vert x(n) \vert + \beta
    > \frac{r+\delta}{1+r} + \frac{1-\delta}{1+r}
    = \frac{r+\delta+1-\delta}{1+r}
    =1,
  \end{equation*}
  a contradiction.
  So $\Vert x\Vert_\infty\leq \frac{r+\delta}{1+r}$.
  Also, notice that $(0,\frac{1}{1+r})\in S_{\delta}$
  (note that
  $\Vert (0,\frac{1}{1+r})\tn \leq
  (1+r)\Vert (0,\frac{1}{1+r})\Vert_1 = 1$
  by \eqref{exam:desiteonorma}).

  Let us estimate $\Vert (x,\beta)-(0,\frac{1}{1+r})\Vert$.
  By the Krein--Milman theorem
  \begin{align*}
    \Vert (x,\beta) - (0,\frac{1}{1+r}) \Vert
    =
    \sup\limits_{x^*\in \ext(U^*)}
    \vert x^*((x,\beta)-(0,\frac{1}{1+r}))\vert.
  \end{align*}
  Given $x^*\in \ext(U^*)$,
  then $x^*=(y^*,\lambda)$ for $y^*\in \ell_1$
  and $\lambda \in \mathbb{R}$.
  From now on we will distinguish two cases.

	\textit{Case 1:} Assume first that  $y^*=0$.
    This implies, according to \eqref{exam:polyextrem},
    that $\vert \lambda\vert=1+r$.
    Since $\frac{1}{1+r}\geq \beta\geq \frac{1-\delta}{1+r}$
    we get that
    \begin{equation*}
      \vert x^*((x,\beta) - (0,\frac{1}{1+r})) \vert
      = \vert \lambda \vert \vert \beta-\frac{1}{1+r} \vert
      \leq (1+r)\frac{\delta}{1+r}
      = \delta.
    \end{equation*}

	\textit{Case 2:} Assume now that $y^*\neq 0$, then, by \eqref{exam:polyextrem},
    $\vert \lambda\vert=1$ and $y^* = \pm e_k$
    for suitable $k \in \mathbb{N}$.
    Hence
    \begin{align*}
      \vert x^*((x,\beta) - (0,\frac{1}{1+r})) \vert
      &=
      \vert y^*(x) + \lambda(\beta-\frac{1}{1+r}) \vert
      \\
      &\leq \Vert x\Vert_\infty + \vert \lambda \vert
      \vert \beta - \frac{1}{1+r} \vert
      \\
      &\leq \frac{r+\delta}{1+r}+\frac{\delta}{1+r}.
    \end{align*}

  Taking into account the above inequalities we get that
  \begin{equation*}
    \Vert (x,\beta) - (0,\frac{1}{1+r})\Vert
    \leq \frac{r+\delta}{1+r} + \frac{\delta}{1+r}.
  \end{equation*}
  This means that
  $S_\delta \subseteq B((0,\frac{1}{1+r}),
  \frac{r+\delta}{1+r} + \frac{\delta}{1+r})$,
  so $\mathcal{T}^s(X) \leq
  \frac{r+\delta}{1+r} + \frac{\delta}{1+r}$.
  Since $\delta>0$ was arbitrary we get that
  $\mathcal T^s(X)\leq \frac{r}{1+r}$.
  
  For the second part of the proof,
  let us prove that $\mathcal T^{cc}(X)\geq \frac{r}{1+r}$,
  for which we will prove that every convex combination of slices of
  $B_X$ has diameter at least $\frac{2r}{1+r}$.
  The proof will be motivated by \cite[Proposition~2.2]{loru}.
  Take a convex combination of slices
  $C := \sum_{i=1}^n \lambda_i S(B_X, x_i^*,\alpha_i)$,
  a point $\sum_{i=1}^n \lambda_i x_i\in C$, and
  $0 < \varepsilon < \frac{r}{1+r}$.
  Given $i \in \{1,\ldots, n\}$ define
  \begin{equation*}
    A_i :=
    \{
    f\in \ext(B_{X^*}) :
    \vert f(x_i) \vert > \frac{1}{1+r} + \varepsilon
    \}.
  \end{equation*}
  Since $\ext(B_{X^*})' \subseteq \frac{1}{1+r}B_{X^*}$,
  a compactness argument implies that $A_i$ is finite.
  Consequently, we can take
  \begin{equation*}
    y \in \left(
      \bigcap\limits_{i=1}^n
      \bigcap\limits_{f\in A_i}
      \ker(f)\cap \ker(x_i^*)
    \right)
    \cap S_X.
  \end{equation*}
  Pick $i \in \{1,\ldots, n\}$. We claim that
  \begin{equation*}
    x_i \pm
    ( \frac{r}{1+r}-\varepsilon ) y
    \in S(B_{X^*}, x_i^*, \alpha_i).
  \end{equation*}
  First, notice that
  \begin{equation*}
    x_i^*(x_i\pm ( \frac{r}{1+r}-\varepsilon )y)
    =
    x_i^*(x_i)
    >
    1-\alpha
  \end{equation*}
  since $y\in \ker(x_i^*)$.
  On the other hand let us prove that
  $\Vert x_i\pm ( \frac{r}{1+r}-\varepsilon )y\Vert\leq 1$.
  To this end, notice that
  \begin{equation*}
    \Vert x_i \pm ( \frac{r}{1+r} - \varepsilon )y \Vert
    = \sup\limits_{f\in \ext(B_{X^*})}
    \vert f(x_i \pm (\frac{r}{1+r} - \varepsilon)y )\vert.
  \end{equation*}
  Given $f\in \ext(B_{X^*})$ we have two cases to consider:

	\textit{Case 1:}    If $f\in A_i$ we get that $f(y)=0$, and so
    \[
    \vert f(x_i\pm ( \frac{r}{1+r}-\varepsilon )y) \vert
    = \vert f(x_i)\vert \leq 1.
    \]

	\textit{Case 2:}   If $f\notin A_i$, then
    $\vert f(x_i)\vert \le \frac{1}{1+r} + \varepsilon$, and so
    \begin{equation*}
      \vert f(x_i\pm ( \frac{r}{1+r}-\varepsilon )y) \vert
      \leq
      \vert f(x_i)\vert + ( \frac{r}{1+r}-\varepsilon )\vert f(y)\vert
      \leq \frac{1}{1+r}+\frac{r}{1+r}=1.
    \end{equation*}

  This implies that
  $\sum_{i=1}^n\lambda_i  (x_i\pm ( \frac{r}{1+r}-\varepsilon )y)\in C$,
  so
  \begin{align*}
    \diam(C)
    &\geq
    \left\Vert
      \sum_{i=1}^n \lambda_i
      \left(x_i+( \frac{r}{1+r} - \varepsilon )y\right)
      -
      \sum_{i=1}^n \lambda_i
      \left( x_i-( \frac{r}{1+r} - \varepsilon )y\right)
    \right\Vert\\
    &= 2 \frac{r}{1+r} - 2\varepsilon.
  \end{align*}
  Since $\varepsilon>0$ was arbitrary we deduce that
  $\diam(C)\geq \frac{2r}{1+r}$.
  Consequently, if there exists a convex combination of slices $C$
  such that $C\subseteq B(0,\rho)$, then
  \begin{equation*}
    \frac{2r}{1+r} \leq \diam(C)
    \leq \diam(B(0,\rho))
    = 2\rho\Rightarrow \rho \geq \frac{r}{1+r}.
  \end{equation*}
  This implies that $\mathcal T^{cc}(X)\geq \frac{r}{1+r}$.

  Hence we have proved that
  \begin{equation*}
    \frac{r}{1+r}
    \leq \mathcal{T}^{cc}(X)
    \leq \mathcal{T}(X)
    \leq \mathcal{T}^s(X)
    \leq \frac{r}{1+r},
  \end{equation*}
  as desired.
\end{proof}

\begin{proof}[Proof of Theorem \ref{theorem:consindex[0,2]}]
First observe that, if $r$ equals $0,1$ or $2$, then take $X$ to be $\ell_1$, $c_0$ or $C[0,1]$, respectively. If $r\in (0,1)$, then there exists a $s\in (0,\infty)$ such that $r=\frac{s}{s+1}$ and apply Proposition~\ref{prop:polyhedralexample} to $s$. If $r\in(1,2)$, then there exists a $p\in(1,\infty)$ such that $r = 2^{1/p}$ and apply Theorem~\ref{thm: T p} to $X=C[0,1]\oplus_p C[0,1]$.
\end{proof}

As a consequence of our Proposition~\ref{prop:polyhedralexample} we obtain the following result.

\begin{corollary}\label{corollary:polyhedral}
For every $\delta\in (0,2)$ there exists a Banach space $X$ such that every convex combination of slices has diameter $\geq \delta$ but such that, for every $\varepsilon>0$, there exists a slice $S$ of $B_X$ such that $\diam(S)\leq \delta+\varepsilon$.
\end{corollary}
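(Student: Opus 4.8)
The plan is to invoke Proposition~\ref{prop:polyhedralexample} with the parameter $r$ tuned to $\delta$, and then to extract both required properties directly from the two quantitative estimates proved inside it.

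First I would choose $r$ so that the lower bound on diameters of convex combinations of slices equals $\delta$. Since that lower bound is $\frac{2r}{1+r}$, I solve $\frac{2r}{1+r}=\delta$, which gives $r=\frac{\delta}{2-\delta}$; this is a legitimate choice because $\delta\in(0,2)$ forces $r>0$. A quick computation shows $1+r=\frac{2}{2-\delta}$ and hence $\frac{r}{1+r}=\frac{\delta}{2}$. Let $X$ be the Banach space furnished by Proposition~\ref{prop:polyhedralexample} for this value of $r$.

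The first requirement is then immediate: the second half of the proof of Proposition~\ref{prop:polyhedralexample} shows that every convex combination of slices $C$ of $B_X$ satisfies $\diam(C)\geq\frac{2r}{1+r}$, and by our choice of $r$ this is exactly $\delta$. For the second requirement I would reuse the family of slices $S_\alpha:=S(B_X,(0,1+r),\alpha)$ from the first half of that proof. There it is shown that $S_\alpha\subseteq B\big((0,\tfrac{1}{1+r}),\tfrac{r+\alpha}{1+r}+\tfrac{\alpha}{1+r}\big)$. Since a ball of radius $\rho$ has diameter $2\rho$, this yields $\diam(S_\alpha)\leq\frac{2r+4\alpha}{1+r}=\delta+\frac{4\alpha}{1+r}$. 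Given $\varepsilon>0$, taking $\alpha:=\frac{\varepsilon(1+r)}{4}$ produces a slice $S=S_\alpha$ with $\diam(S)\leq\delta+\varepsilon$, as desired.

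There is no genuine obstacle here: the content is entirely contained in Proposition~\ref{prop:polyhedralexample}, and the corollary is a repackaging of its proof for the specific choice $r=\frac{\delta}{2-\delta}$. The only points that require care are the algebraic inversion $\delta=\frac{2r}{1+r}$ and keeping the slice parameter $\alpha$ notationally distinct from $\delta$; passing from the containment of $S_\alpha$ in a ball to the diameter estimate via $\diam=2\rho$ is the one small step that must be made explicit, since the proposition was phrased in terms of thickness rather than diameter.
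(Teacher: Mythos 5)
Your proposal is correct and matches the paper's intended argument: the paper states this corollary as an immediate consequence of Proposition~\ref{prop:polyhedralexample}, and your choice $r=\frac{\delta}{2-\delta}$ together with the two estimates from inside that proposition's proof (the lower bound $\diam(C)\geq\frac{2r}{1+r}$ for every convex combination of slices, and the containment of $S_\alpha$ in a ball of radius $\frac{r+2\alpha}{1+r}$) is exactly the intended derivation. You are also right to draw on the internal estimates rather than only the stated values of the indices, since the lower bound on diameters of \emph{all} convex combinations of slices is strictly stronger than $\mathcal{T}^{cc}(X)\geq\frac{r}{1+r}$ alone.
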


The previous corollary extends \cite[Corollary 2.9]{hln}, where it was proved for the cases $\delta\in (1,2)$, and answers an open question (see the Remark after Theorem 2.8 in \cite{hln}).

In Proposition~\ref{prop: estimates of T}~\ref{it: a}, it seems to be
unknown whether the inequality can be strict
(see Question~\ref{ques: 1-sum}).
However, for the index $\mathcal{T}^s(\cdot)$,
we always have equality.

\begin{proposition}\label{prop: estimates of Ts}
Let $X$ and $Y$ be Banach spaces. Then 
 \begin{enumerate}
     \item $\mathcal{T}^s(X\oplus_1 Y)= \min\{\mathcal{T}^s(X), \mathcal{T}^s(Y)\}$;
     \item\label{it: Ts b} $\mathcal{T}^s(X\oplus_p Y)\leq 2^{1/p}$ for every $1<p<\infty$;
     \item $\mathcal T^s(X\oplus_\infty Y)\geq
       \min\{\mathcal T^s(X),\mathcal T^s(Y)\}$,
       where equality holds if $\mathcal T^s(X\oplus_\infty Y)>1$.
 \end{enumerate}
\end{proposition}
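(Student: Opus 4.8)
The plan is to prove, for each index, matching upper and lower bounds, using throughout the elementary fact that a slice of a factor embeds into a slice of the direct sum. Part~\ref{it: Ts b} is immediate from Proposition~\ref{prop: upperbound}: taking $N=\|\cdot\|_p$ one has $\|\cdot\|_p\leq 2^{1/p}\|\cdot\|_\infty$, and since $B_{(\R^2,\|\cdot\|_p)}$ is strictly convex for $1<p<\infty$ both $(1,0)$ and $(0,1)$ are extreme points; hence $\mathcal{T}^s(X\oplus_p Y)\leq 2^{1/p}$. The real content is in (a) and (c).

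For (a), write $Z=X\oplus_1 Y$, so that $\Zast=\Xast\oplus_\infty\Yast$, and set $m=\min\{\mathcal{T}^s(X),\mathcal{T}^s(Y)\}$. For the upper bound, given $r>\mathcal{T}^s(X)$ fix $x\in S_X$ and a slice $S(B_X,\xast,\alpha)\subseteq B(x,r)$; since passing to a smaller $\alpha$ only shrinks the slice, I may take $\alpha$ arbitrarily small. The functional $(\xast,0)$ has norm $1$ on $\Zast$, and every $(u,v)\in S(B_Z,(\xast,0),\alpha)$ satisfies $\|u\|+\|v\|\leq 1$ and $\xast(u)>1-\alpha$, so $\|v\|<\alpha$ and $u\in S(B_X,\xast,\alpha)\subseteq B(x,r)$. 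Centring at $(x,0)\in S_Z$ gives $\|(u,v)-(x,0)\|_1=\|u-x\|+\|v\|\leq r+\alpha$; letting $\alpha\to 0$, $r\to\mathcal{T}^s(X)$, and arguing symmetrically, yields $\mathcal{T}^s(Z)\leq m$. For the lower bound (note that Proposition~\ref{LowerP} gives only the weaker $2(m-1)$), fix a unit vector $(\tilde x,\tilde y)$ and a slice $S(B_Z,(\xast,\yast),\alpha)$; as $\max\{\|\xast\|,\|\yast\|\}=1$ I may assume $\|\xast\|=1$, whence $u\mapsto(u,0)$ maps $S(B_X,\xast,\alpha)$ into the slice. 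When $\tilde x\neq 0$, setting $\hat x=\tilde x/\|\tilde x\|\in S_X$, the bound $\mathcal{T}^s(X)\geq m$ provides $u\in S(B_X,\xast,\alpha)$ with $\|u-\hat x\|>m-\varepsilon$; then, using $\|\hat x-\tilde x\|=1-\|\tilde x\|$ and $\|\tilde x\|+\|\tilde y\|=1$,
\[
\|(u,0)-(\tilde x,\tilde y)\|_1=\|u-\tilde x\|+\|\tilde y\|>(m-\varepsilon)-(1-\|\tilde x\|)+\|\tilde y\|=m-\varepsilon,
\]
the renormalization loss being exactly cancelled. The case $\tilde x=0$ (so $\|\tilde y\|=1$) is easier: pick $u\in S(B_X,\xast,\alpha)$ with $\|u\|\approx 1$. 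Thus $\mathcal{T}^s(Z)\geq m$, and equality holds.

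For (c), write $Z=X\oplus_\infty Y$, $\Zast=\Xast\oplus_1\Yast$, and again $m=\min\{\mathcal{T}^s(X),\mathcal{T}^s(Y)\}$. The lower bound $\mathcal{T}^s(Z)\geq m$ runs as above but in the $\|\cdot\|_\infty$ geometry: given a unit vector $(\tilde x,\tilde y)$ I may assume $\|\tilde x\|=1$, and given a slice with functional $(\xast,\yast)$ (now $\|\xast\|+\|\yast\|=1$) I produce a point $(u,v)$ of the slice with $\|u-\tilde x\|\geq m-\varepsilon$ --- using $\mathcal{T}^s(X)\geq m$ to choose its first coordinate $u$ far from $\tilde x$ (after normalizing $\xast$ and selecting $v$ so that $(u,v)$ lies in the slice) when $\xast\neq 0$, and taking $u=-\tilde x$ when $\xast=0$ --- so that $\|(u,v)-(\tilde x,\tilde y)\|_\infty\geq\|u-\tilde x\|\geq m-\varepsilon$; here no renormalization is needed since $\tilde x$ is already a unit vector.

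The remaining claim, equality when $\mathcal{T}^s(Z)>1$, is where the main difficulty lies. The natural candidate slice $S(B_Z,(\xast,0),\alpha)$ leaves the second coordinate entirely free in $B_Y$, so centring at $(x,0)$ only gives $\|(u,v)-(x,0)\|_\infty=\max\{\|u-x\|,\|v\|\}\leq\max\{r,1\}$; letting $r\to\mathcal{T}^s(X)$ and arguing symmetrically produces the unconditional estimate $\mathcal{T}^s(Z)\leq\max\{m,1\}$. Hence always $m\leq\mathcal{T}^s(Z)\leq\max\{m,1\}$, and if $\mathcal{T}^s(Z)>1$ then $\max\{m,1\}\geq\mathcal{T}^s(Z)>1$ forces $m>1$, so $\max\{m,1\}=m$ and $\mathcal{T}^s(Z)=m$. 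The point to watch is precisely that the free second coordinate cannot be pushed below $1$, which is exactly why equality is asserted only in the regime $\mathcal{T}^s(Z)>1$; the limiting steps are routine.
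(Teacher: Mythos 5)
Your proof is correct, and in parts (b), the lower bound in (a), and the lower bound in (c) it follows essentially the same route as the paper: Proposition~\ref{prop: upperbound} for (b), the renormalization estimate $\|u-\tilde x\|+\|\tilde y\|\ge \|u-\hat x\|-(1-\|\tilde x\|)+\|\tilde y\|$ (with the loss exactly cancelled by $\|\tilde x\|+\|\tilde y\|=1$) for the $\ell_1$ lower bound, and the inclusion of a product of factor slices into a slice of the $\ell_\infty$-sum for (c). Two points differ. For the upper bound in (a) the paper simply refers to the proof of \cite[Proposition~4.5 (2)]{Rueda} with $m=1$, whereas you write out the short direct argument (the constraint $\|u\|+\|v\|\le 1$ together with $\xast(u)>1-\alpha$ forces $\|v\|<\alpha$); same mechanism, just made self-contained. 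For the equality clause in (c) your organization is genuinely different: you first establish the \emph{unconditional} estimate $\mathcal T^s(X\oplus_\infty Y)\le\max\{\min\{\mathcal T^s(X),\mathcal T^s(Y)\},1\}$ by transporting a small slice of $B_X$ up to $B_Z$ and absorbing the free $Y$-coordinate into the constant $1$, and then deduce equality when $\mathcal T^s(Z)>1$ by a formal two-line argument; the paper instead works directly under the hypothesis $\mathcal T^s(Z)-\varepsilon>1$ and transports largeness \emph{downward}, observing that since $\|v\|\le 1$ the quantity $\max\{\|x-u\|,\|v\|\}>1$ must be realized by $\|x-u\|$, whence $\mathcal T^s(X)\ge\mathcal T^s(Z)-\varepsilon$. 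Both arguments hinge on the same observation about the bounded second coordinate; your version has the minor bonus of recording the unconditional upper bound, which in particular yields $\mathcal T^s(X\oplus_\infty Y)=\min\{\mathcal T^s(X),\mathcal T^s(Y)\}$ already whenever that minimum is at least $1$, without assuming $\mathcal T^s(X\oplus_\infty Y)>1$.
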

\begin{proof}
(a). Let us first show that $\mathcal{T}^s(X\oplus_1 Y)\geq
\min\{\mathcal{T}^s(X), \mathcal{T}^s(Y)\}$. Set $Z:= X\oplus_1 Y$ and
let $S(B_Z, (\xast, \yast), \alpha)$ be a slice of $B_{Z}$, $(x,y)\in
S_Z$, and $\varepsilon>0$.

Without loss of generality suppose that $\|\xast\|=1$. Thus we have two cases either $x=0$ or $x\neq 0$. 

\textit{Case 1:} Assume first that $x=0$, hence $\|y\|=1$. Now find an element $u\in S_X$ such that $\xast(u)>1-\alpha$. Observe that $(u,0)\in S(B_Z, (\xast, \yast), \alpha)$ and
\[
\|(u,0)-(0,y)\|=2\geq \mathcal{T}^s(X).
\]

\textit{Case 2:} Assume now that $x\neq 0$. Consider the slice $S(B_X,\xast, \alpha)$ and $\frac{x}{\|x\|}\in S_X$. Find an $u\in S(B_X,\xast, \alpha)$ such that $\|\frac{x}{\|x\|}-u\|\geq \mathcal{T}^s(X)-\varepsilon$. Now $(u,0)\in S(B_Z,(\xast, \yast), \alpha)$ and 
\begin{align*}
    \|(x,y)-(u,0)\|&=\|x-u\|+\|y\|\\
    &\geq \|\frac{x}{\|x\|}-u\|-\|\frac{x}{\|x\|}-x\|+\|y\|\\
    &\geq \mathcal{T}^s(X)-\varepsilon-(1-\|x\|)+\|y\|\\
    &=\mathcal{T}^s(X)-\varepsilon.
\end{align*}
Therefore, $\mathcal{T}^s(X\oplus_1 Y)\geq \min\{\mathcal{T}^s(X), \mathcal{T}^s(Y)\}$.

The proof of $\mathcal{T}^s(X\oplus_1 Y)\leq \min\{\mathcal{T}^s(X),\mathcal{T}^s(Y)\}$ is the same as the proof of \cite[Proposition~4.5 (2)]{Rueda}, except with $m=1$.

(b). This follows immediately from  Proposition~\ref{prop: upperbound}.

 (c). Let us first show that $\mathcal{T}^s(X\oplus_\infty Y)\geq \min\{\mathcal{T}^s(X), \mathcal{T}^s(Y)\}$. Denote by $Z:=X\oplus_\infty Y$. Let $S(B_{Z},(x^{*},y^{*}),\alpha)$ be a slice of $B_Z$, $(x,y)\in  S_{Z}$, and $\varepsilon>0$. 

Define
	\begin{displaymath}
	S^X:=
	\begin{dcases}
	S(B_X,\frac{\xast}{\|\xast\|}, \alpha), &\text{ if } x^{*}\neq 0,\\
	B_X, &\text{ if } x^{*}=0,
	\end{dcases}
		\end{displaymath}
and 
		\begin{displaymath}
	S^Y:=
	\begin{dcases}
	S(B_Y,\frac{\yast}{\|\yast\|},\alpha), &\text{ if } y^{*}\neq 0,\\
	B_Y, &\text{ if } y^{*}=0.
	\end{dcases}
	\end{displaymath}

	Observe that $S^X\times S^Y \subset S(B_{Z},(x^{*},y^{*}),\alpha)$. Since $\max\{\|x\|, \|y\|\}=1$, we will suppose from now on that $\|x\|=1$. Hence there exists an $x_0 \in S^X$ such that $\|x_0-x\| > \mathcal{T}^s(X) - \varepsilon$. Let $y_0\in S^Y$ be arbitrary. We have that
	
	\begin{displaymath}
	\begin{aligned}
	\|(x_0,y_0)-(x,y)\| &= \max \lbrace \|x_0-x\|, \|y_0-y\| \rbrace \geq  \mathcal{T}^s(X) - \varepsilon.
	\end{aligned}
	\end{displaymath}
	The case when $\|y\|=1$ is similar. Therefore, by the arbitrariness of $\varepsilon$, we see that $\mathcal{T}^s(X\oplus_\infty Y)\geq \min\{\mathcal{T}^s(X), \mathcal{T}^s(Y)\}$.

	Assume now that $\mathcal{T}^s(X\oplus_\infty Y)>1$ and let us show that then $\mathcal{T}^s(X\oplus_\infty Y) \leq \min \lbrace \mathcal{T}^s(X),\mathcal{T}^s(Y) \rbrace$. Pick an $\varepsilon>0$ such that $\mathcal{T}^s(X\oplus_\infty Y)-\varepsilon>1$. Let $x\in S_X$ and $S(B_X,\xast, \alpha)$ be a slice of $B_X$. Observe that $S(B_Z,(\xast, 0), \alpha)$ is a slice of $B_{Z}$ and $(x,0)\in S_{Z}$. Thus there is an element $(u,v)\in S(B_Z,(\xast,0),\alpha)$ such that 
	\[
	1<\mathcal{T}^s(X\oplus_\infty Y)-\varepsilon\leq \|(x,0)-(u,v)\|=\max\{\|x-u\|, \|v\|\}.
	\]
	Since $\|v\|\leq 1$, then we must have that $\|x-u\|\geq \mathcal{T}^s(X\oplus_\infty Y)-\varepsilon$. Notice that $u\in S(B_X,\xast,\alpha)$, hence $\mathcal{T}^s(X)\geq \mathcal{T}^s(X\oplus_\infty Y)-\varepsilon$. Finally, by the arbitrariness of $\varepsilon$, we conclude that $\mathcal{T}^s(X\oplus_\infty Y) \leq \min \lbrace \mathcal{T}^s(X),\mathcal{T}^s(Y) \rbrace$. 
\end{proof}

\begin{remark}\label{rem: strict inequality Ts}
The inequality in Proposition~\ref{prop: estimates of Ts} (c) can be strict if we remove the assumption on $\mathcal T^s(X\oplus_\infty Y)$. Indeed, let $X=c_0$ and $Y=\R$, then $X\oplus_\infty Y$ is isometrically isomorphic to $c_0$ and 
\[
T^s(X\oplus_\infty Y)=1>0=\mathcal{T}^s(\R)=\min \lbrace \mathcal{T}^s(X),\mathcal{T}^s(Y) \rbrace.
\]
\end{remark}

We end this section by studying the index $\mathcal{T}^{cc}(\cdot)$ in $\ell_p$-sums.

\begin{proposition}\label{prop: estimates of Tcc}
Let $X$ and $Y$ be Banach spaces. Then 
 \begin{enumerate}
     \item $\mathcal{T}^{cc}(X\oplus_1 Y)\leq \min\{\mathcal{T}^{cc}(X), \mathcal{T}^{cc}(Y)\}$;
     \item\label{it: Tcc b} $\mathcal{T}^{cc}(X\oplus_p Y)\leq 2^{1/p}$ for every $1<p<\infty$;
     \item $\mathcal T^{cc}(X\oplus_\infty Y)\geq
       \min\{\mathcal T^{cc}(X),\mathcal T^{cc}(Y)\}$,
       where equality holds if $\mathcal T^{cc}(X\oplus_\infty Y)>1$.
 \end{enumerate}
\end{proposition}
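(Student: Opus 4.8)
The plan is to dispatch (b) and (a) quickly and spend the effort on (c); throughout I use the Remark that $\mathcal{T}^{cc}(\cdot)$ does not change if one restricts to convex combinations of \emph{slices}, so I only ever manipulate convex combinations of slices. Statement (b) is immediate: from the displayed chain $0\le\mathcal{T}^{cc}\le\mathcal{T}\le\mathcal{T}^s\le2$ and Proposition~\ref{prop: estimates of Ts}\,\ref{it: Ts b} (equivalently Proposition~\ref{prop: upperbound}) we get $\mathcal{T}^{cc}(X\oplus_p Y)\le\mathcal{T}^s(X\oplus_p Y)\le2^{1/p}$.

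For (a), put $Z:=X\oplus_1 Y$, so $Z^\ast=\Xast\oplus_\infty\Yast$, and prove $\mathcal{T}^{cc}(Z)\le\mathcal{T}^{cc}(X)$ (the bound by $\mathcal{T}^{cc}(Y)$ is symmetric). Fix $r>\mathcal{T}^{cc}(X)$, a point $x\in S_X$, and a convex combination of slices $C=\sum_{i=1}^n\lambda_i S(B_X,\xast_i,\alpha_i)\subseteq B(x,r)$. The mechanism is that a slice of $B_Z$ taken through the norm-one functional $(\xast_i,0)$ forces a small second coordinate: if $(u,v)\in S(B_Z,(\xast_i,0),\beta)$ then $\xast_i(u)>1-\beta$ and $\|u\|+\|v\|\le1$, hence $u\in S(B_X,\xast_i,\beta)$ and $\|v\|<\beta$. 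Replacing every $\alpha_i$ by a common small $\beta\le\min_i\alpha_i$ (which only shrinks $C$ inside $B(x,r)$) and lifting to $C_Z:=\sum_i\lambda_i S(B_Z,(\xast_i,0),\beta)$, any point $\sum_i\lambda_i(u_i,v_i)\in C_Z$ has $\sum_i\lambda_i u_i\in C\subseteq B(x,r)$ and $\|\sum_i\lambda_i v_i\|<\beta$, so $\|\sum_i\lambda_i(u_i,v_i)-(x,0)\|_1<r+\beta$. Since $(x,0)\in S_Z$, this gives $\mathcal{T}^{cc}(Z)\le r+\beta$; letting $\beta\to0$ and $r\downarrow\mathcal{T}^{cc}(X)$ finishes (a).

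For (c), write $Z:=X\oplus_\infty Y$, so $Z^\ast=\Xast\oplus_1\Yast$, and set $m:=\min\{\mathcal{T}^{cc}(X),\mathcal{T}^{cc}(Y)\}$. The lower bound $\mathcal{T}^{cc}(Z)\ge m$ reuses the slice decomposition from the proof of Proposition~\ref{prop: estimates of Ts}\,(c): given $C=\sum_i\lambda_i S(B_Z,(\xast_i,\yast_i),\alpha_i)$ and a centre $(x,y)\in S_Z$, put $S^X_i:=S(B_X,\xast_i/\|\xast_i\|,\alpha_i)$ if $\xast_i\ne0$ and $S^X_i:=B_X$ otherwise, and likewise $S^Y_i$; using $\|\xast_i\|+\|\yast_i\|=1$ one checks $S^X_i\times S^Y_i\subseteq S(B_Z,(\xast_i,\yast_i),\alpha_i)$. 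Assuming without loss of generality $\|x\|=1$ and applying $\mathcal{T}^{cc}(X)\ge m$ to the convex combination $\sum_i\lambda_i S^X_i$ of relatively weakly open subsets of $B_X$, I obtain $u_i\in S^X_i$ with $\|\sum_i\lambda_i u_i-x\|>m-\eps$; choosing any $v_i\in S^Y_i$ gives a point $\sum_i\lambda_i(u_i,v_i)\in C$ whose sup-distance to $(x,y)$ is at least $\|\sum_i\lambda_i u_i-x\|>m-\eps$. Thus no such $C$ lies in a ball of radius $<m$, i.e. $\mathcal{T}^{cc}(Z)\ge m$. For the reverse inequality under the hypothesis $\mathcal{T}^{cc}(Z)>1$, fix $\eps>0$ with $\mathcal{T}^{cc}(Z)-\eps>1$, a point $x\in S_X$, and $C_X=\sum_i\lambda_i S(B_X,\xast_i,\alpha_i)$, and lift to $C_Z:=\sum_i\lambda_i S(B_Z,(\xast_i,0),\alpha_i)$ based at $(x,0)\in S_Z$. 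By the definition of $\mathcal{T}^{cc}(Z)$ some $\sum_i\lambda_i(u_i,v_i)\in C_Z$ satisfies $\|\sum_i\lambda_i(u_i,v_i)-(x,0)\|>\mathcal{T}^{cc}(Z)-\eps$; as the second coordinate contributes at most $\|\sum_i\lambda_i v_i\|\le1<\mathcal{T}^{cc}(Z)-\eps$, the sup-norm is carried by the first coordinate, so $\|\sum_i\lambda_i u_i-x\|>\mathcal{T}^{cc}(Z)-\eps$ with each $u_i\in S(B_X,\xast_i,\alpha_i)$. Hence $\mathcal{T}^{cc}(X)\ge\mathcal{T}^{cc}(Z)-\eps$, and letting $\eps\to0$ (with the symmetric bound for $Y$) yields $\mathcal{T}^{cc}(Z)\le m$, giving equality.

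The routine parts are (a) and (b); the main care is required in (c). One must verify that passing to convex combinations of slices via the Remark is legitimate in both directions, that the product slices $S^X_i\times S^Y_i$ genuinely sit inside the prescribed slice of $B_Z$, and—this is the crux of the equality case—that the standing hypothesis $\mathcal{T}^{cc}(Z)>1$ is precisely what forces the witnessing displacement into the $X$-coordinate, since the $Y$-coordinate can contribute at most $1$ to the sup-norm.
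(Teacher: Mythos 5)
Your proposal is correct and follows essentially the same route as the paper's proof: part (b) via $\mathcal{T}^{cc}\le\mathcal{T}^s$, part (a) by lifting the slices to $S(B_Z,(\xast_i,0),\beta)$ and exploiting the smallness of the second coordinate, and part (c) by the product decomposition $S^X_i\times S^Y_i$ for the lower bound and the "$\|v\|\le 1<\mathcal{T}^{cc}(Z)-\eps$ forces the displacement into the $X$-coordinate" argument for the equality case. All the points you flag for verification (legitimacy of passing to slices via the Remark, the inclusion $S^X_i\times S^Y_i\subseteq S(B_Z,(\xast_i,\yast_i),\alpha_i)$, and the role of the hypothesis $\mathcal{T}^{cc}(Z)>1$) check out exactly as in the paper.
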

\begin{proof}
(a). Let $\varepsilon>0$, and assume without loss of generality that
$\min\{\mathcal T^{cc}(X),\mathcal T^{cc}(Y)\}=\mathcal
T^{cc}(X)$. Find a convex combination of slices $\sum_{i=1}^n
\lambda_i S(B_X, \xast_i, \alpha)$ of $B_X$ and an $x\in S_X$ such
that
\[
\sum_{i=1}^n \lambda_i S(B_X, \xast_i, \alpha)\subset B(x, \mathcal{T}^{cc}(X)+\varepsilon).
\]

Let $\delta\in (0,\alpha)$ and set $Z:=X\oplus_1 Y$. Observe that 
\[
S(B_{Z}, (\xast_i,0), \delta)\subset S(B_X, \xast_i, \alpha)\times \delta B_Y
\] 
for every $i\in\{1,\dots,n\}$. Therefore,
\begin{align*}
    \sum_{i=1}^n \lambda_i S(B_{Z}, (\xast_i,0), \delta)&\subset\sum_{i=1}^n \lambda_i S(B_X, \xast_i, \alpha)\times\delta B_Y\\
    &\subset B(x, \mathcal{T}^{cc}(X)+\varepsilon)\times \delta B_Y\\
    &\subset B((x,0), \mathcal{T}^{cc}(X)+\varepsilon+\delta).
\end{align*}
Since $\varepsilon$ and  $\delta$ can be chosen to be arbitrarily small, we have that $\mathcal{T}^{cc}(X\oplus_1 Y)\leq \min\{\mathcal{T}^{cc}(X), \mathcal{T}^{cc}(Y)\}$.

(b). This follows immediately from the inequality $\mathcal{T}^{cc}(\cdot)\leq \mathcal{T}^{s}(\cdot)$ and Proposition~\ref{prop: estimates of Ts}~\ref{it: Ts b}.

(c). Let us first show that $\mathcal{T}^{cc}(X\oplus_\infty Y)\geq \min\{\mathcal{T}^{cc}(X), \mathcal{T}^{cc}(Y)\}$. Denote by $Z:=X\oplus_\infty Y$. Let $n\in \mathbb N$, for every $i\in\{1,\dots,n\}$ let $S(B_Z,(x^{*}_{i},y^{*}_{i}),\alpha)$ be slices of $B_Z$, $\lambda_{i} > 0$ with $\sum_{i=1}^{n}\lambda_{i}=1$, $(x,y)\in  S_{Z}$, and $\varepsilon>0$. Denote by $S:=\sum_{i=1}^{n}\lambda_{i}S(B_Z,(x^{*}_{i},y^{*}_{i}),\alpha)$.
	
	Define
	\begin{displaymath}
	S^X_{i}:=
	\begin{dcases}
	S(B_X,\frac{\xast_{i}}{\|\xast_{i}\|}, \alpha), &\text{ if } \xast_{i}\neq 0,\\
	B_X, &\text{ if } \xast_{i}=0,
	\end{dcases}
	\end{displaymath}
	and 
	\begin{displaymath}
	S^Y_{i}:=
	\begin{dcases}
	S(B_Y,\frac{\yast_{i}}{\|\yast_{i}\|},\alpha), &\text{ if } \yast_{i}\neq 0,\\
	B_Y, &\text{ if } \yast_{i}=0.
	\end{dcases}
	\end{displaymath}
	
	 Denote by $S^{X}:=\sum_{i=1}^{n}\lambda_{i}S^{X}_{i}$ and $S^{Y}:=\sum_{i=1}^{n}\lambda_{i}S^{Y}_{i}$. Notice that $S^{X}_{i}\times S^{Y}_{i} \subset S(B_Z,(x^{*}_{i},y^{*}_{i}),\alpha)$ and that therefore $S^{X}\times S^{Y}\subset S$. Since $\max\{\|x\|, \|y\|\}=1$, we will suppose from now on that $\|x\|=1$. Hence there exists an $x_0\in S^{X}$ such that $\|x_0-x\| > \mathcal{T}^{cc}(X) - \varepsilon$. Let $y_0\in S^Y$ be arbitrary. We have that
	
	\begin{displaymath}
	\begin{aligned}
	\|(x_0,y_0)-(x,y)\| &= \max \lbrace \|x_0-x\|, \|y_0-y\| \rbrace \geq  \mathcal{T}^{cc}(X) - \varepsilon.
	\end{aligned}
	\end{displaymath}
	The case when $\|y\|=1$ is similar. Therefore, by the arbitrariness of $\varepsilon$, we see that $\mathcal{T}^{cc}(X\oplus_\infty Y)\geq \min\{\mathcal{T}^{cc}(X), \mathcal{T}^{cc}(Y)\}$.

	Assume now that $\mathcal{T}^{cc}(X\oplus_\infty Y)>1$ and let us show that then $\mathcal{T}^{cc}(X\oplus_\infty Y) \leq \min \lbrace \mathcal{T}^{cc}(X),\mathcal{T}^{cc}(Y) \rbrace$. Pick an $\varepsilon>0$ such that $\mathcal{T}^{cc}(X\oplus_\infty Y)-\varepsilon>1$. Let $x\in S_X$, $S(B_X,\xast_{i}, \alpha)$ be slices of $B_X$, and $\lambda_{i} > 0$, such that $\sum_{i=1}^{n}\lambda_{i}=1$. Observe that for each $i$ we have that $S(B_Z,(\xast_{i}, 0), \alpha)$ is a slice of $B_{Z}$ and $(x,0)\in S_{Z}$. Thus there is an element $(u,v)\in \sum_{i=1}^{n}\lambda_{i}S(B_Z,(\xast_{i},0),\alpha)$ such that 
	\[
	1<\mathcal{T}^{cc}(X\oplus_\infty Y)-\varepsilon\leq \|(x,0)-(u,v)\|=\max\{\|x-u\|, \|v\|\}.
	\]
	Since $\|v\|\leq 1$, then we must have that $\|x-u\|\geq \mathcal{T}^{cc}(X\oplus_\infty Y)-\varepsilon$. Notice that $u\in \sum_{i=1}^{n}\lambda_{i}S(B_X,\xast_{i},\alpha)$, hence $\mathcal{T}^{cc}(X)\geq \mathcal{T}^{cc}(X\oplus_\infty Y)-\varepsilon$. Finally, from the arbitrariness of $\varepsilon$, we conclude that $\mathcal{T}^{cc}(X\oplus_\infty Y) \leq \min \lbrace \mathcal{T}^{cc}(X),\mathcal{T}^{cc}(Y) \rbrace$.
\end{proof}

\begin{remark}
The same example as in Remark~\ref{rem: strict inequality Ts} shows that the inequality in Proposition~\ref{prop: estimates of Tcc} (c) can be strict if we remove the assumption on $\mathcal T^{cc}(X\oplus_\infty Y)$.
\end{remark}

Recall that from Proposition~\ref{prop: estimates of Ts} (a) we know that $\mathcal{T}^{s}(X\oplus_1 Y)= \min\{\mathcal{T}^{s}(X), \mathcal{T}^{s}(Y)\}$ holds for all Banach spaces $X$ and $Y$. However, we do not know whether the corresponding equalities hold for the indices $\mathcal{T}(\cdot)$ and $\mathcal{T}^{cc}(\cdot)$ too.

\begin{ques}\label{ques: 1-sum}
Let $X$ and $Y$ be Banach spaces.
\begin{enumerate}
    \item $\mathcal{T}(X\oplus_1 Y)= \min\{\mathcal{T}(X), \mathcal{T}(Y)\}$?
    \item $\mathcal{T}^{cc}(X\oplus_1 Y)= \min\{\mathcal{T}^{cc}(X), \mathcal{T}^{cc}(Y)\}$?
\end{enumerate}

\end{ques}

\section{Remarks and open questions}\label{section:remarks}

In a dual Banach space one can also consider the weak$^\ast$ versions of the Daugavet indices of thickness. In \cite{Rueda} the weak$^\ast$ version of $\mathcal{T}(\cdot)$, denoted by $\mathcal{T}_{w^{\ast}}(\cdot)$, was introduced. For a Banach space $X$ we will also consider
\begin{equation*}
  \mathcal{T}^s_{w^{\ast}}(\Xast) = \inf\left\lbrace r>0 \;\middle|\;
  \begin{tabular}{@{}l@{}}
    \text{ there exist $\xast\in S_{\Xast}$ and a weak$^*$ slice }\\ \text{  $S$ of $B_{\Xast}$ such that} $S \subset B(\xast,r)$
   \end{tabular}
  \right\rbrace
\end{equation*}
and

\begin{equation*}
  \mathcal{T}_{w^{\ast}}^{cc}(\Xast) = \inf\left\lbrace r>0 \;\middle|\;
  \begin{tabular}{@{}l@{}}
     \text{ there exist $\xast\in S_{\Xast}$ and a convex  }\\ \text{ combination $C$ of relatively weak$^\ast$ open }\\ \text{ subsets of $B_{\Xast}$ such that} $\emptyset\neq C \subset B(\xast,r)$
   \end{tabular}
  \right\rbrace.
\end{equation*}

Clearly, for any Banach space $X$ we have that 
\begin{equation}\label{eq: w*-indices}
0\leq \mathcal{T}_{w^{\ast}}^{cc}(\Xast)\leq \mathcal{T}_{w^{\ast}}(\Xast)\leq \mathcal{T}_{w^{\ast}}^s(\Xast)\leq 2,
\end{equation}

and observe that
\begin{equation}\label{eq: w*-indices2}
   \mathcal{T}^s(\Xastast)\leq  \mathcal{T}_{w^{\ast}}^s(\Xastast)\leq \mathcal{T}^s(X)
\end{equation}
and
\begin{equation}\label{eq: w*-indices3}
    \mathcal{T}(\Xastast) \leq \mathcal{T}_{w^{\ast}}(\Xastast)\leq \mathcal{T}(X),
\end{equation}
and
\begin{equation}\label{eq: w*-indices4}
    \mathcal{T}^{cc}(\Xastast)\leq \mathcal{T}^{cc}_{w^{\ast}}(\Xastast)\leq \mathcal{T}^{cc}(X).
\end{equation}

\begin{remark}\label{remark:relaindiweakstar}
Let us make some observations on the above indices:
\begin{enumerate}
\item By considering the biduals of the Banach spaces which give us the strict inequalities between the regular indices and taking into account (\ref{eq: w*-indices2})--(\ref{eq: w*-indices4}) one has that the inequalities in (\ref{eq: w*-indices}) can in general be strict.
\item Given a dual Banach space $X^*$, the inequality $\mathcal T(X^*)\leq \mathcal T_{w^*}(X)$ may be strict. Indeed, if $C[0,1]$, then $\mathcal T^{cc}_{w^*}(X^*)=2$ since $X$ has the Daugavet property. However, $\mathcal T^s(X^*)=0$ since $B_{X^*}$ contains slices of arbitrarily small diameter. This shows that the first inequality of (\ref{eq: w*-indices2})--(\ref{eq: w*-indices4}) can be strict.
\item Again take $X=C[0,1]$. It satisfies that $\mathcal T^{cc}(X)=2$ since $X$ has the Daugavet property. However, $\mathcal T^{s}_{w^*}(X^{**})<2$ since $X^*$ fails the Daugavet property. This shows that the second inequality of (\ref{eq: w*-indices2})--(\ref{eq: w*-indices4}) can be strict.
\end{enumerate}
\end{remark}

In \cite[Problem~5.3]{Rueda} it is wondered whether the equality 
\begin{equation}\label{eq: weakly compact}
\inf\left\lbrace \|T+I\| \;\middle|\;
  \begin{tabular}{@{}l@{}}
    \text{ $T\in \mathcal L(X)$, $\|T\|=1$, and}\\ \text{ $T$ is weakly compact} 
   \end{tabular}
  \right\rbrace
  =\max\{\mathcal T(X), \mathcal{T}_{w^\ast}(\Xast)\}
\end{equation}
holds for every Banach space $X$. We will now show that equality~(\ref{eq: weakly compact}) does not hold in general. We begin by observing that the proof of \cite[Proposition~4.4]{Rueda} actually shows that.

\begin{proposition}\label{prop: weakly compact estimate}
  Let $X$ be a Banach space. Then, for every norm one and weakly compact operator $T\colon X\to X$, it follows that 
  \[
  \|T+I\|\geq \max\{\mathcal{T}^s(X), \mathcal{T}^s_{w^\ast}(X^\ast)\}.
  \]
\end{proposition}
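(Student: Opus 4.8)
The plan is to prove the two estimates $\|T+I\|\ge\mathcal{T}^s(X)$ and $\|T+I\|\ge\mathcal{T}^s_{w^\ast}(\Xast)$ separately, both by the same mechanism; write $M:=\|T+I\|$. The common device is the trivial pointwise bound $\|w+Tw\|\le M$ for every $w\in B_X$, which says that each $w$ lies in $B(-Tw,M)$. Consequently, if for a given $\varepsilon>0$ I can exhibit a slice $S$ of $B_X$ on which $T$ is \emph{almost constant}, say $\|Tw-z\|<\varepsilon$ for all $w\in S$ with $\|z\|>1-\varepsilon$, then, writing $\hat z:=z/\|z\|\in S_X$ and taking the centre $x:=-\hat z$, the triangle inequality gives $\|w-x\|=\|w+\hat z\|\le\|w+Tw\|+\|Tw-z\|+\|z-\hat z\|\le M+\varepsilon+(1-\|z\|)<M+2\varepsilon$ for every $w\in S$, i.e. $S\subseteq B(x,M+2\varepsilon)$. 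Since $x\in S_X$, this yields $\mathcal{T}^s(X)\le M+2\varepsilon$, and letting $\varepsilon\to0$ finishes the first inequality. Thus the whole problem reduces to producing such an almost-constant slice.

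To produce it I would exploit the weak compactness of $T$. The set $K:=\overline{\co}^{w}(T(B_X))\subseteq X$ is weakly compact and convex, hence has the Radon--Nikod\'ym property; moreover $\sup_{k\in K}\|k\|=\|T\|=1$ since $T(B_X)\subseteq B(0,1)$. As an RNP set is the closed convex hull of its strongly exposed points and the norm is convex, the norm attains values arbitrarily close to $1$ at such points, so I may fix a strongly exposed point $z\in K$ with $\|z\|>1-\varepsilon$, exposed by some $z^\ast\in S_{\Xast}$ with $\diam S(K,z^\ast,\beta)<\varepsilon$ for $\beta$ small. Because $\sup_{w\in B_X}z^\ast(Tw)=\sup_K z^\ast=\|T^\ast z^\ast\|>0$, the preimage $S:=\{w\in B_X:z^\ast(Tw)>\|T^\ast z^\ast\|-\beta\}$ is precisely the slice $S(B_X,T^\ast z^\ast/\|T^\ast z^\ast\|,\beta/\|T^\ast z^\ast\|)$, and for $w\in S$ one has $Tw\in S(K,z^\ast,\beta)\ni z$, whence $\|Tw-z\|<\varepsilon$. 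This is exactly the almost-constant slice required above.

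For the weak$^\ast$ inequality I would run the identical argument on the adjoint, using $M=\|T^\ast+I\|$ and the weakly compact operator $T^\ast$ (Gantmacher). Applying the previous paragraph to $K^\ast:=\overline{\co}^{w}(T^\ast(B_{\Xast}))\subseteq\Xast$ produces a strongly exposed point $w_0^\ast\in K^\ast$ with $\|w_0^\ast\|>1-\varepsilon$, exposed by some $\xi\in S_{\Xastast}$, and the corresponding set in $B_{\Xast}$ is $S=\{y^\ast\in B_{\Xast}:\langle T^\ast y^\ast,\xi\rangle>\sup_{K^\ast}\xi-\beta\}$. The crux---and the step I expect to be the main obstacle---is that $S$ must be a \emph{weak$^\ast$} slice, i.e. exposed by an element of $X$ rather than merely of $\Xastast$. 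Its exposing functional is $y^\ast\mapsto\langle T^\ast y^\ast,\xi\rangle=\langle y^\ast,T^{\ast\ast}\xi\rangle$, hence given by $\eta:=T^{\ast\ast}\xi\in\Xastast$; but since $T$ is weakly compact, Gantmacher's theorem gives $T^{\ast\ast}(\Xastast)\subseteq X$, so in fact $\eta\in X$ and $S=S(B_{\Xast},\eta/\|\eta\|,\beta/\|\eta\|)$ is a genuine weak$^\ast$ slice. On it $T^\ast$ is almost constant equal to $w_0^\ast$, so with centre $x^\ast:=-w_0^\ast/\|w_0^\ast\|\in S_{\Xast}$ the same three-term estimate yields $S\subseteq B(x^\ast,M+2\varepsilon)$ and hence $\mathcal{T}^s_{w^\ast}(\Xast)\le M+2\varepsilon$; letting $\varepsilon\to0$ completes the proof. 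The two routine points I would still verify are that $K$ and $K^\ast$ indeed carry the RNP and that the suprema of the relevant linear functionals over $B_X$ (resp. $B_{\Xast}$) coincide with those over $K$ (resp. $K^\ast$), since the identification of the preimages as honest slices depends on these equalities.
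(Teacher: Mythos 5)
Your argument is correct and is essentially the same as the one the paper relies on: the paper gives no new proof here but observes that the proof of \cite[Proposition~4.4]{Rueda} already yields the $\mathcal{T}^s$ and $\mathcal{T}^s_{w^\ast}$ bounds, and that proof is precisely your mechanism of pulling back a small-diameter slice of the weakly compact set $\overline{\co}^{w}(T(B_X))$ via $T^\ast z^\ast$ (resp.\ via $T^{\ast\ast}\xi\in X$, by Gantmacher, for the weak$^\ast$ case). The only detail worth writing out is why $\|T^\ast z^\ast\|>0$, which follows since $0\in K$ and the exposing slice of $K$ has diameter less than $\varepsilon<1/2$ while containing $z$ with $\|z\|>1-\varepsilon$.
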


By \cite[Theorem~2.4]{MR3334951}, there exists
an equivalent renorming $Z$ of $c_0$ such that all slices
of $B_Z$ have diameter two and there are
relatively weakly open subsets of $B_Z$ with arbitrarily
small diameter. Then $\mathcal{T}^s(Z)\geq 1$, but
$\mathcal{T}(Z)=0=\mathcal{T}_{w^{\ast}}(Z^\ast)$ (notice that $Z^*$
has the Radon--Nikod\'{y}m property because it is isomorphic to
$\ell_1$, and the result follows from \cite[Theorem~11.8]{fab}).
Therefore, the equality~(\ref{eq: weakly compact})
fails for this Banach space $Z$.

\begin{ques}
Does the equality 
\begin{equation*}
\inf\left\lbrace \|T+I\| \;\middle|\;
  \begin{tabular}{@{}l@{}}
    \text{ $T\in \mathcal L(X)$, $\|T\|=1$,}\\ \text{ and $T$ is weakly compact} 
   \end{tabular}
  \right\rbrace
  =\max\{\mathcal T^s(X), \mathcal{T}^s_{w^\ast}(\Xast)\}
\end{equation*}
hold for every Banach space $X$?
\end{ques}

Our next aim is to show that the Daugavet index $\mathcal{T}^s(\cdot)$ behaves well with respect to the Banach--Mazur distance. Recall that this distance between two isomorphic Banach spaces $X$ and $Y$ is defined by
\[
d(X,Y):=\inf \{\|L\|\|L^{-1}\|\colon L\colon X\to Y \text{ is an isomorphism}\}.
\]

\begin{proposition}\label{T^cc closed BM-dist}
	Let $ X $ be  a  Banach  space and $r\in [0,2]$. If for every $\delta>0$ there exists a Banach space $Y$ which is isomorphic to $X$ such that $d(X,Y) < 1+\delta$ with $\mathcal{T}^s(Y)=r$, then $\mathcal{T}^{s}(X)\geq r$.
\end{proposition}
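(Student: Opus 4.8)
The plan is to prove the statement directly, by showing that for every $\rho>\mathcal{T}^s(X)$ one has $r\le\rho$; since $\rho$ is arbitrary this gives $r\le\mathcal{T}^s(X)$. The mechanism is that a ``point plus slice inside a ball'' configuration witnessing $\mathcal{T}^s(X)<\rho$ can be transported through a low-distortion isomorphism $L\colon X\to Y$ into an analogous configuration in $Y$, so $\mathcal{T}^s(Y)$ cannot exceed $\rho$ by more than an error controlled by the distortion.

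First I would fix $\rho>\mathcal{T}^s(X)$ and pick $x_0\in S_X$ together with a slice $S=S(B_X,\xast,\alpha)$, where we may assume $0<\alpha<1$ (shrinking $\alpha$ only shrinks $S$), such that $S\subseteq B(x_0,\rho)$. For a given $\delta>0$ let $Y$ be the space supplied by the hypothesis, with $\mathcal{T}^s(Y)=r$ and $d(X,Y)<1+\delta$, and choose an isomorphism $L\colon X\to Y$; after rescaling $L$ by a positive scalar I may assume $\|L^{-1}\|=1$ and $\|L\|\le 1+\delta$, so that
\begin{equation}\label{eq:plan-distortion}
\|x\|\le\|Lx\|\le(1+\delta)\|x\|\qquad\text{for all }x\in X.
\end{equation}

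Next I would transport the data. For the functional I set $\yast:=\xast\circ L^{-1}$; from \eqref{eq:plan-distortion} one has $\tfrac{1}{1+\delta}B_X\subseteq L^{-1}(B_Y)\subseteq B_X$, and hence $\tfrac{1}{1+\delta}\le\|\yast\|\le 1$. The key point is that, as soon as $\delta<\tfrac{\alpha}{1-\alpha}$, this lower bound forces $1-\alpha<\|\yast\|$, so that
\[
\tilde S:=\{y\in B_Y:\yast(y)>1-\alpha\}=S\!\left(B_Y,\tfrac{\yast}{\|\yast\|},\,1-\tfrac{1-\alpha}{\|\yast\|}\right)
\]
is a genuine nonempty slice of $B_Y$. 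For the center I take $y_0:=Lx_0/\|Lx_0\|\in S_Y$, noting that $\|Lx_0-y_0\|=\|Lx_0\|-1\le\delta$ since $1\le\|Lx_0\|\le 1+\delta$.

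To close the argument I would verify $\tilde S\subseteq B(y_0,(1+\delta)\rho+\delta)$: for $y\in\tilde S$ the preimage $x:=L^{-1}y$ satisfies $\|x\|\le\|y\|\le1$ and $\xast(x)=\yast(y)>1-\alpha$, so $x\in S\subseteq B(x_0,\rho)$, whence $\|y-Lx_0\|=\|L(x-x_0)\|\le(1+\delta)\rho$ and the triangle inequality gives the claim. This yields $\mathcal{T}^s(Y)\le(1+\delta)\rho+\delta$, that is $r\le(1+\delta)\rho+\delta$ for all sufficiently small $\delta>0$; letting $\delta\to0^+$ gives $r\le\rho$, and then $\rho\downarrow\mathcal{T}^s(X)$ finishes the proof. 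The main obstacle, and really the only delicate point, is the norm estimate $\|\yast\|\ge\tfrac{1}{1+\delta}$ together with the threshold $\delta<\tfrac{\alpha}{1-\alpha}$: these are precisely what guarantee that the transported level set $\{\yast>1-\alpha\}$ remains a proper slice rather than collapsing to the empty set. Everything else is the routine transfer of the configuration across $L$ via \eqref{eq:plan-distortion}.
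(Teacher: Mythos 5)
Your proof is correct and follows essentially the same route as the paper's: both arguments transport a slice and a unit-sphere point across a $(1+\delta)$-isomorphism via the functional $x^*\circ L^{-1}$ and let $\delta\to 0^+$. You merely run it in the contrapositive direction (pushing a near-optimal configuration from $X$ into $Y$ to bound $\mathcal T^s(Y)$ from above, rather than pulling a far point of a $Y$-slice back into an arbitrary slice of $B_X$), and your normalization $\|L^{-1}\|=1$ makes the inclusion $L^{-1}(\tilde S)\subseteq S$ automatic, which sidesteps the paper's rescaling of the pulled-back point by $1/(1+\delta)$ and its $\delta^2$ slice-width bookkeeping.
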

\begin{proof}
 Let $S(B_X,\xast,\alpha)$ be a slice of $B_X$, $x\in S_X$, and $\varepsilon>0$. Let $\delta\in (0,\min\{\alpha,\varepsilon\})$. 
  
 Next find a Banach space $Y$ with $\mathcal{T}^s(Y) = r$
 such that $d(X,Y) < 1 + \delta$.
 Then there exist an isomorphism $L:X\rightarrow Y$ and such that $\|L\| = 1$ and $\|L^{-1}\| < 1+\delta$.
  
  Consider now
  \begin{equation*}
    \yast :=
    \frac{(L^{-1})^{\ast}\xast}{\|(L^{-1})^{\ast}\xast\|}\in S_{\Yast}
    \quad \text{ and }\quad
    y:=\frac{Lx}{\|Lx\|}\in S_Y.
  \end{equation*}
  Since $\mathcal{T}^s(Y)=r$, we can find a $v\in
  S(B_Y,y^*,\delta^2)$ such that
  $\|v-y\|\geq r-\varepsilon$. Denote by
  $u:=\frac{L^{-1}v}{(1+\delta)}$ and
  observe that $u\in S(B_X,\xast,\delta)\subset S(B_X, \xast,\alpha)$. 
  Our aim now is to
  show that $\|u-x\|\geq r-3\varepsilon$. Indeed,
  \begin{align*}
    r-\varepsilon
    &\leq
    \| v - \frac{Lx}{\|Lx\|}\|
    \leq
    \|L^{-1}v-\frac{x}{\|Lx\|}\|\\
    &\leq
    \|u - x\|
    +
    \|(1+\delta)u -u\|
    +
    \|x-\frac{x}{\|Lx\|}\|\\
    &\leq
    \|u - x\| + \delta + \delta\\
    &<
    \|u - x\| + 2\varepsilon.
  \end{align*}
  Hence, $\|u-x\|\geq \mathcal{T}^s(Y) - 3\varepsilon$
  and from the arbitrariness of
  $\varepsilon$, we have
  $\mathcal{T}^{s}(X) \geq \mathcal{T}^s(Y)$.
\end{proof}

An application of Proposition~\ref{T^cc closed BM-dist} together with Proposition~\ref{prop: equivalent Daugavet} immediately gives us that the class of Banach spaces with the Daugavet property is  closed  with  respect  to  the  Banach--Mazur  distance.

\begin{corollary}
Let $ X $ be  a  Banach  space. If for every $\delta>0$ there exists a Banach space $Y$ which is isomorphic to $X$ such that $d(X,Y) < 1+\delta$ and $Y$ has the Daugavet property, then $X$ also has the Daugavet property.
\end{corollary}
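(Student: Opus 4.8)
The plan is to translate the Daugavet property into the single numerical statement $\mathcal{T}^s(\cdot)=2$ and then invoke Proposition~\ref{T^cc closed BM-dist} directly. The first step I would carry out is to record the equivalence that a Banach space $Z$ has the Daugavet property if and only if $\mathcal{T}^s(Z)=2$. This follows by comparing the definition of $\mathcal{T}^s$ with condition~(ii) of Proposition~\ref{prop: equivalent Daugavet}: the equality $\mathcal{T}^s(Z)=2$ says precisely that no slice $S$ of $B_Z$ is contained in a ball $B(z,r)$ with $z\in S_Z$ and $r<2$, that is, for every $z\in S_Z$, every slice $S$, and every $\varepsilon>0$ there is $y\in S$ with $\|z-y\|\geq 2-\varepsilon$, which is exactly~(ii).

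With this reformulation in hand, the hypothesis of the corollary reads: for every $\delta>0$ there exists $Y$ isomorphic to $X$ with $d(X,Y)<1+\delta$ and $\mathcal{T}^s(Y)=2$. This is precisely the hypothesis of Proposition~\ref{T^cc closed BM-dist} taken with $r=2$, so that proposition yields $\mathcal{T}^s(X)\geq 2$. Since the trivial bound $\mathcal{T}^s(X)\leq 2$ always holds, I conclude $\mathcal{T}^s(X)=2$, and reading the equivalence of the first step in the other direction gives that $X$ has the Daugavet property.

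I do not expect any substantial obstacle: once Proposition~\ref{T^cc closed BM-dist} is available, the argument is a one-line specialization at $r=2$ followed by the trivial upper bound $\mathcal{T}^s(X)\leq 2$. The only point deserving genuine care is the verification of the equivalence $\mathcal{T}^s(Z)=2\Leftrightarrow Z$ has the Daugavet property, which must be read off faithfully from the definition of the index and from Proposition~\ref{prop: equivalent Daugavet}; all the remaining content is packaged in the already-established Banach--Mazur stability of $\mathcal{T}^s(\cdot)$.
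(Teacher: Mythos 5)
Your proposal is correct and follows exactly the paper's route: the paper likewise combines Proposition~\ref{T^cc closed BM-dist} (applied with $r=2$) with the characterization of the Daugavet property as $\mathcal{T}^s(\cdot)=2$ coming from Proposition~\ref{prop: equivalent Daugavet}. Your extra care in verifying the equivalence $\mathcal{T}^s(Z)=2\Leftrightarrow Z$ has the Daugavet property is exactly the content the paper leaves implicit, so there is nothing to add.
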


We will finish by connecting the current work with some
open questions related to a question of Ivakhno from 2006.

Recall that for a bounded set $C$ of a Banach space $X$ the \emph{radius of $C$} is defined as 
$$r(C):=\inf\{r>0: C\subseteq B(x,r)\mbox{ for some }x\in X\}.$$ A Banach space $X$ is said to have  the \emph{$r$-big slice property}
if every slice of $B_X$ is of radius one \cite{Ivakhno}. 
Observe that the $r$-big slice property of a
Banach space $X$ implies that $\mathcal{T}^s(X)\geq 1$.
Moreover, if every slice of $B_X$ has diameter two,
then $X$ has the $r$-big slice property.
Ivakhno asked if the converse is true \cite[p.~96]{Ivakhno}.

In view of Ivakhno's question, given a Banach space $X$, the following questions make sense:
  \begin{enumerate}
  \item \label{ques: rBSPa}
    If $\mathcal{T}^s(X) \ge 1$, then does every slice of $B_X$ have
    diameter two?
  \item \label{ques: weakopen}
    If $\mathcal{T}(X) \ge 1$,
    then does every nonempty relatively weakly open subset of $B_X$
    have diameter two?
  \item \label{ques: ccslices} If $\mathcal T^{cc}(X)\geq 1$, then
    does every convex combination of slices of $B_X$ have diameter
    two?
  \end{enumerate}

A negative answer to \ref{ques: ccslices} easily follows from our
results, as the following remark shows.

\begin{remark}
  Let $1<p<\infty$ and $Y$ be a Banach
  space with the Daugavet property, and take $X:=Y\oplus_p Y$, then
  $\mathcal{T}^{cc}(X) = 2^{1/p}>1$ (see Theorem~\ref{thm: T p}), but
  for every $\varepsilon>0$ there is a nonempty convex combination of
  slices with diameter less than $2^{1/p}+\varepsilon$
  \cite[Theorem~2.8]{hln}.
\end{remark}

We end this paper by proving that the answer
to Ivakhno's question (and henceforth, the answer to \ref{ques: rBSPa})
is negative. Indeed, we have the following result.

\begin{theorem}\label{theo:JTinfty}
  There exists a Banach space $X$ with the $r$-big slice property (hence $\mathcal T^s(X)\geq 1$) and there exists $\varphi\in S_{X^*}$ so that
  \begin{equation*}
    \inf\limits_{\alpha>0} \diam(S(B_X,\varphi,\alpha))\leq \sqrt{2}.
  \end{equation*}
\end{theorem}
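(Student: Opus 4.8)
The plan is to construct a concrete Banach space realizing both properties simultaneously, and the natural candidate—suggested by the name of the theorem, \texttt{theo:JTinfty}, and by the appearance of $\sqrt 2$—is an infinite James-tree-type space, equipped with a supremum norm over dyadic branches. I would take $X$ to be a variant of the James tree space built on the dyadic tree $\T=\bigcup_{n\geq 0}\{0,1\}^n$, where an element is a finitely supported scalar function $x$ on $\T$ and the norm is
\begin{equation*}
  \|x\| := \sup \Bigl\{ \Bigl(\sum_{j} \bigl|\textstyle\sum_{t\in I_j} x(t)\bigr|^2\Bigr)^{1/2}\Bigr\},
\end{equation*}
the supremum being taken over all finite families $\{I_j\}$ of pairwise disjoint segments (intervals along branches) of $\T$. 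The reason to expect $\sqrt 2$ is that the basis vectors $e_t$ at a node $t$ and its two immediate successors interact so that a difference of two norm-one elements supported on comparable portions of the tree is pinned at $\sqrt 2$ rather than $2$; this is exactly the mechanism that forces small-diameter slices while keeping every slice fat in radius.

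The proof then splits into two independent halves. \emph{First}, to exhibit the slice with $\inf_\alpha \diam(S(B_X,\varphi,\alpha))\leq\sqrt 2$, I would choose $\varphi\in S_{X^*}$ to be a norming functional that concentrates its mass on a single branch (or a single long segment), say $\varphi$ dual to a normalized sum $\sum_{t\in I}e_t$ along an initial segment $I$. The key computation is to show that any two elements $x,y$ in a thin slice $S(B_X,\varphi,\alpha)$ both have $\varphi(x),\varphi(y)$ close to $1$, which forces most of their norm to live along the branch $I$ and to have a common ``sign'' there; estimating $\|x-y\|$ via the segment norm then yields a bound tending to $\sqrt 2$ as $\alpha\to 0$. \emph{Second}, to verify the $r$-big slice property I must show that \emph{every} slice $S(B_X,\psi,\beta)$ has radius one, i.e.\ for each center $z$ and each point of the slice there is another point of the slice at distance essentially $1$ from $z$; the standard device here is to perturb any $x$ in the slice by adding a normalized vector $e_s$ attached at a node $s$ deep enough (past the support of $x$ and outside the relevant segments of $\psi$) so that $x\pm e_s$ stays in $B_X$ and in the slice while moving distance $1$ in each direction from any prospective center.

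The main obstacle I anticipate is the norm computation in the second half: proving the $r$-big slice property requires controlling the James-tree norm under the perturbation $x\mapsto x+ \theta e_s$ for both signs $\theta=\pm1$ and showing the result stays in $B_X$. Because the norm is a supremum over segment families, adding $e_s$ at a fresh node $s$ can only contribute through segments that meet $s$, and one must argue that a deep enough choice of $s$ makes this contribution harmless to the global supremum while still guaranteeing membership in the slice defined by an \emph{arbitrary} functional $\psi$. Making ``deep enough'' uniform over all $x$ in the slice and all segment families competing in the supremum is the delicate point; I would handle it by a compactness/finiteness argument on the extreme points of $B_{X^*}$ analogous to the one used in the proof of Proposition~\ref{prop:polyhedralexample}, localizing the finitely many segments that can detect a given node and then pushing $s$ past all of them. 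Once both halves are in place, the $r$-big slice property gives $\mathcal T^s(X)\geq 1$ and the explicit slice gives the $\sqrt 2$ bound, completing the proof and thereby answering Ivakhno's question and question~\ref{ques: rBSPa} negatively.
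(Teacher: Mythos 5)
The central problem is your choice of space. You take $X$ to be the James tree space itself --- finitely supported functions on the dyadic tree, completed under the supremum of $\ell_2$-sums over disjoint segments. That space is a separable dual space, hence has the Radon--Nikod\'{y}m property, hence its unit ball has slices of arbitrarily small diameter; in particular it has denting points and cannot have the $r$-big slice property. So the first half of your program is unachievable in your $X$. The paper instead takes $X=B_\infty$, the canonical \emph{predual} of $JT_\infty$ (the tree with infinitely many successors at each node), which fails the RNP; slices of $B_X$ are then cut by finitely supported vectors of $JT_\infty$, and the points of the slice are functionals built from segment functionals, which is exactly the structure the bigness argument exploits.

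Even leaving the choice of space aside, your perturbation device $x\mapsto x\pm e_s$ does not keep you in the unit ball. In a tree every node is comparable with all of its ancestors (in particular with the root), so a segment running from a node of $\supp(x)$ down to $s$ collects both the mass of $x$ and the new $\pm 1$: for instance $\Vert e_\emptyset\pm e_s\Vert= 2$ for \emph{every} node $s$, and even when $s$ is incomparable with all of $\supp(x)$ one only gets $\Vert x\pm e_s\Vert\approx(\Vert x\Vert^2+1)^{1/2}$, not $\leq 1$. The paper's mechanism is genuinely different: inside a slice of $B_{JT_\infty^*}$ it finds, via \cite[Lemma 2.1]{blrjames}, an element $g=\sum_{i}\lambda_i f_{S_i}$ with $\sum_i\lambda_i^2=1$ and $S_1,\ldots,S_n$ disjoint segments; it then \emph{extends each segment} by one fresh offspring $t^i$ past $\supp(x)$ (this keeps the segments disjoint, so the norm and the slice membership are unchanged), and separates the new element $y^*=\sum_i\lambda_i f_{S_i\cup\{t^i\}}$ from an arbitrary center $x^*$ by evaluating at $z=\sum_i\lambda_i e_{t^i}$. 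The inequality $\Vert z\Vert\le 1$ holds because the $t^i$ form an antichain, which is the content of Lemma~\ref{lemma:weaknullJTinf} (an $\ell_2$-upper estimate plus weak nullity of successor sequences, and this is where the infinite branching of $JT_\infty$ is used to make $x^*(e_{t^i})$ small). Finally, the $\sqrt{2}$ half is not established by the informal ``common sign along a branch'' computation you sketch; in the paper it is a nontrivial quoted result, \cite[Theorem 2.2]{blrjames}, and again it concerns the predual $B_\infty$, not $JT$.
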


In order to prove it let us introduce a bit of notation.
Let us define
\begin{equation*}
  T := \{(\alpha_1,\ldots, \alpha_k)\ \colon
  \ k\in\mathbb N, \alpha_1,\ldots, \alpha_k \in \mathbb{N}\}
  \cup \{\emptyset\}.
\end{equation*}
Given $(\alpha_1,\ldots, \alpha_k),(\beta_1,\ldots, \beta_p)\in T\setminus \{\emptyset\}$ we say that
\begin{equation*}
  (\alpha_1,\ldots, \alpha_k)
  \leq
  (\beta_1,\ldots, \beta_p)
  \Leftrightarrow
  \begin{cases}
    \vert (\alpha_1,\ldots, \alpha_k)\vert
    \leq \vert(\beta_1,\ldots, \beta_p)\vert \\
    \alpha_i = \beta_i\ \mbox{for all}\ 1 \leq i\leq k,
  \end{cases}
\end{equation*}
where $\vert (\alpha_1,\ldots, \alpha_n)\vert :=n$
and $\vert \emptyset\vert :=0$, and we declare $\emptyset:=\min T$.
This binary relation defines a partial order on $T$.

A \emph{segment} in $T$ is a totally ordered and
finite subset $S\subseteq T$.

Given $x:T\longrightarrow \mathbb R$, let us consider
\begin{equation*}
  \Vert x\Vert
  =
  \sup\left( \sum_{i=1}^n
    \left( \sum_{t\in S_i}x(t) \right)^2
  \right)^\frac{1}{2},
\end{equation*}
where the sup is taken over all families $\{S_1,\ldots,S_n\}$
of disjoint segments of $T$.

Now $JT_\infty$ is defined as the completion of  the space of
finitely nonzero functions defined on $T$ (i.e. functions $x:T\longrightarrow \mathbb R$ such that $\{t\in T\ |\ x(t)\neq 0\}$ is finite) for the above norm. Given
$\alpha\in T$ let us define
\begin{equation*}
  e_\alpha(\beta)
  :=
  \begin{cases}
    1 & \mbox{if } \beta=\alpha,\\
    0 & \mbox{otherwise.}
  \end{cases}
\end{equation*}
Then it is known that $\{e_\alpha\}_{\alpha\in T}$ is a Schauder basis
for $JT_\infty$ and that $JT_\infty$ is a dual space.
We denote by $\{e_\alpha^*\}_{\alpha\in T}$ the biorthogonal sequence
of $\{e_\alpha\}_{\alpha\in T}$ .
Then
$B_\infty := \overline{\linspan}\{e_\alpha^*\ /\ \alpha\in T\}$,
where the closure is taken in $JT_\infty^*$, is a complete
predual of $JT_\infty$.

The space $JT_\infty$ was  introduced in \cite{goma},
where it is proved that $B_{\infty}$   fails the
Radon--Nikod\'{y}m property.
Furthermore, every infinite-dimensional subspace
of $JT_\infty$ contains an isomorphic copy of $\ell_2$ and so
$JT_\infty$ does not contain isomorphic copies of $\ell_1$.

Let us start with the following lemma about weakly null sequences in
$JT_\infty$.

\begin{lemma}\label{lemma:weaknullJTinf}
  Let $\{t_n:n\in\mathbb N\}$ be the set of successors of a given
  element $t\in T$. Then $\{e_{t_n}\}_n$ is weakly null.
\end{lemma}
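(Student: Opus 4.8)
The statement asks to show that for a fixed $t \in T$, if $\{t_n : n \in \mathbb{N}\}$ is the set of immediate successors of $t$, then $\{e_{t_n}\}_n$ is weakly null in $JT_\infty$. Since $JT_\infty$ is a dual space with predual $B_\infty = \overline{\linspan}\{e_\alpha^* : \alpha \in T\}$, the plan is to first establish that $\{e_{t_n}\}_n$ is weak$^*$-null, and then upgrade weak$^*$-convergence to weak convergence. The upgrade is legitimate here because the sequence is bounded (each $e_{t_n}$ has norm $1$) and $JT_\infty$ contains no isomorphic copy of $\ell_1$, so by the Odell--Rosenthal / Rosenthal $\ell_1$-theorem machinery (or more directly, since a bounded sequence in a space not containing $\ell_1$ has a weakly Cauchy subsequence, whose weak$^*$ and weak limits must coincide when the weak$^*$ limit is $0$) every weak$^*$-null bounded sequence is weakly null. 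I should state carefully which version I invoke; the cleanest route is: a bounded sequence that is weak$^*$-null and lives in a space without $\ell_1$ is weakly null, applied to every subsequence.

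**Weak$^*$-nullity.** The main computational step is to show $e_{t_n} \to 0$ weak$^*$, i.e. $e_\gamma^*(e_{t_n}) \to 0$ for every basis functional $e_\gamma^* \in B_\infty$ (and hence, by density and uniform boundedness, for every element of $B_\infty$). This is immediate from biorthogonality: $e_\gamma^*(e_{t_n}) = \delta_{\gamma, t_n}$, which is nonzero for at most one value of $n$ (since the successors $t_n$ are distinct), so the sequence of scalars is eventually $0$. Because $\|e_{t_n}\| = 1$ is bounded and the $e_\gamma^*$ span a dense subspace of the predual $B_\infty$, a standard $3\varepsilon$-argument gives $\phi(e_{t_n}) \to 0$ for all $\phi \in B_\infty$, which is exactly weak$^*$-nullity.

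**The main obstacle and its resolution.** The genuine difficulty is the passage from weak$^*$ to weak convergence, since $JT_\infty$ is only a dual space and weak$^*$-convergence is a priori strictly weaker than weak convergence. I expect this to be the crux of the argument. The resolution rests on the stated property that every infinite-dimensional subspace of $JT_\infty$ contains a copy of $\ell_2$ and that $JT_\infty$ contains no copy of $\ell_1$. By Rosenthal's $\ell_1$-theorem, the bounded sequence $\{e_{t_n}\}$ has a weakly Cauchy subsequence $\{e_{t_{n_k}}\}$; its weak limit (in $JT_\infty^{**}$) restricts to a weak$^*$-limit, which we have shown is $0$, forcing the weak limit to be $0$ as well. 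Applying this to an arbitrary subsequence and using the subsequence characterization of weak convergence (a bounded sequence converges weakly to $0$ iff every subsequence has a further subsequence converging weakly to $0$) yields that the full sequence $\{e_{t_n}\}$ is weakly null. One should double-check that the weak$^*$-limit of the weakly Cauchy subsequence is computed against functionals in the predual $B_\infty$, where we already have convergence to $0$, so the two limits are forced to agree.
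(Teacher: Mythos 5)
Your argument splits into a weak\({}^*\)-nullity step and an upgrade to weak nullity, and the gap is in the upgrade. Rosenthal's theorem does give a weakly Cauchy subsequence \(\{e_{t_{n_k}}\}\), which converges in the \(\sigma(JT_\infty^{**},JT_\infty^{*})\) topology to some \(G\in JT_\infty^{**}\). What your weak\({}^*\)-computation shows is only that \(G\) vanishes on the predual \(B_\infty\). But \(B_\infty\) is a \emph{proper} closed subspace of \(JT_\infty^{*}\) (for tree spaces of this kind the quotient \(JT_\infty^{*}/B_\infty\) is large, e.g.\ an \(\ell_2(\Gamma)\) over the uncountable set of branches), so \(G\in B_\infty^{\perp}\) does not force \(G=0\), and you cannot conclude that \(f(e_{t_{n_k}})\to 0\) for every \(f\in JT_\infty^{*}\). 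The general principle you invoke --- a bounded, weak\({}^*\)-null sequence in a dual space not containing \(\ell_1\) is weakly null --- is false. A counterexample: in James' space \(J\) with its shrinking unit vector basis \((e_n)\), the coordinate functionals \((e_n^{*})\subset J^{*}\) are bounded and satisfy \(e_n^{*}(x)=x_n\to 0\) for every \(x\in J\), so they are \(\sigma(J^{*},J)\)-null; yet the functional \(F\in J^{**}\) arising as the weak\({}^*\) limit of the partial sums \(\sum_{i\le m}e_i\) satisfies \(F(e_n^{*})=1\) for all \(n\), so \((e_n^{*})\) is not weakly null, even though \(J^{*}\) contains no copy of \(\ell_1\) (its bidual is separable).

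The missing ingredient is a quantitative estimate specific to \(JT_\infty\). The paper's proof shows, directly from the definition of the norm via disjoint segments and the fact that the successors \(t_1,t_2,\dots\) are pairwise incomparable (so each segment meets \(\{t_1,\dots,t_m\}\) in at most one point, and disjoint segments pick up distinct \(t_j\)'s), that
\begin{equation*}
  \Bigl\Vert \sum_{j=1}^{m}\alpha_j e_{t_j}\Bigr\Vert \le \Bigl(\sum_{j=1}^{m}\alpha_j^{2}\Bigr)^{1/2}.
\end{equation*}
Hence \(e_n\mapsto e_{t_n}\) extends to a bounded operator \(\Phi:\ell_2\to JT_\infty\); since bounded operators are weak--weak continuous and \((e_n)\) is weakly null in \(\ell_2\), the sequence \((e_{t_n})=(\Phi e_n)\) is weakly null. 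This is the step your proposal would need and does not supply; without some such upper \(\ell_2\)-estimate (or another argument that tests \(e_{t_n}\) against \emph{all} of \(JT_\infty^{*}\), not just the predual), the proof does not close.
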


\begin{proof}
  Let $x:=\sum_{j=1}^m \alpha_j e_{t_j}$, where $\alpha_j\in \R$,
  and let us prove that
  $\Vert x\Vert\leq \left( \sum_{j=1}^m
    \alpha_j^2\right)^\frac{1}{2}$.
  To this end, pick a family of disjoint segments
  $S_1,\ldots, S_k$.
  Since $\{t_n\}$ are incomparable then,
  for every $i\in\{1,\ldots, k\}$ then
  $S_i\cap \{t_1,\ldots, t_m\}$ has, at most, one element.
  Define $A$ the set of those $i$ such that
  $ S_i\cap \{t_1,\ldots, t_m\}=\{t_{k_i}\}$.
  Notice that, since the $S_i$ are disjoint, then
  $t_{k_i}\neq t_{k_j}$ if $i\neq j$ with $i,j\in A$.
  Now
  \begin{equation*}
    \sum_{i=1}^m \left(
      \left(\sum_{t\in S_i}x(t)\right)^2
    \right)^\frac{1}{2}
    =
    \left(\sum_{i\in A} \alpha_{t_{k_i}}^2\right)^\frac{1}{2}
    \leq \left(\sum_{i=1}^m \alpha_{i}^2\right)^\frac{1}{2}.
  \end{equation*}
  Taking the supremum on the family of disjoint segments, and taking into
  account the definition of the norm of $JT_\infty$, we get
  \begin{equation*}
    \Vert x\Vert\leq \left(\sum_{i=1}^m \alpha_{i}^2\right)^\frac{1}{2}.
  \end{equation*}
  The previous estimate implies, by the arbitrariness of
  $m\in\mathbb N$ and $\alpha_1,\ldots,\alpha_n\in\mathbb R$,
  that the linear operator $\Phi:\ell_2\longrightarrow JT_\infty$
  given by
  \begin{equation*}
    \Phi(e_n)=e_{t_n}
  \end{equation*}
  is continuous.
  The $w-w$ continuity of $\Phi$ and
  the fact that $\{e_n\}\rightarrow^{w}0$
  in $\ell_2$ concludes the lemma.
\end{proof}

Now we are ready to prove Theorem \ref{theo:JTinfty}.

\begin{proof}[Proof of Theorem \ref{theo:JTinfty}]
  Let $X=B_\infty$,
  the predual of $JT_\infty$ described above.
  The existence of $\varphi\in S_{X^*}$ satisfying our
  requirements follows from \cite[Theorem 2.2]{blrjames}.
  For the remaining part, let us even prove that, given a $w^*$-slice $S$ of $B_{X^{**}}=B_{JT_\infty^*}$, we get that
  \begin{equation*}
   r(S)\geq 1.
  \end{equation*}
   To this end, pick a $w^*$-slice $S:=S(B_{JT_\infty^*},x,\alpha)$,
  for a suitable finitely-supported function
  $x:T\longrightarrow \mathbb R$ of norm one.
  Pick $x^*\in JT_\infty^*$ and $\varepsilon>0$,
  and let us find an element $y^*\in S$ with
  $\Vert x^*-y^*\Vert\geq 1-\varepsilon$.
  To this end, by \cite[Lemma 2.1]{blrjames}
  we can find an element $g\in S$ of the form
  $g:=\sum_{i=1}^n \lambda_i f_{S_i}$, where
  $\lambda_1,\ldots, \lambda_n \in\mathbb R^+$
  with $\sum_{i=1}^n \lambda_i^2=1$ and $S_1,\ldots, S_n$
  is a family of disjoint segments.

  Pick $i\in\{1,\ldots, n\}$.
  Since the set of offspring of a given element is infinite
  and the fact that the support of $x$ is finite,
  we can assume (by adding elements which do not
  belong to $\supp(x)$ into $S_i$ keeping the disjointness
  condition on the segments $S_1,\ldots, S_n$) that,
  if $t_i$ is the maximum element of $S_i$, then
  for every $z\geq t_i$ one has $z\notin \supp(x)$
  and that $\{t_1,\ldots, t_n\}$ are at the same level.
  For every $i\in\{1,\ldots, n\}$ fix $\{t_n^i\}$ the
  set of offspring of $t_i$.
  By Lemma \ref{lemma:weaknullJTinf}, it follows that $\{e_{t_n^i}\}$
  is weakly null, which means that
  $x^*(e_{t_n^i})\rightarrow 0$.
  Consequently, we can find $n$ large enough so that
  $x^*(e_{t_n^i}) < \frac{\varepsilon}{\lambda}$, for
  $\lambda := \min_{1\leq i\leq n} \lambda_i$, holds for every
  $i\in\{1,\ldots, n\}$. Define $R_i := S_i\cup \{t_n^i\}$ and notice
  that $y^*:=\sum_{i=1}^n \lambda_i f_{R_i}$ is a norm-one element
  (because $\{R_1,\ldots, R_n\}$ is still a family of disjoint
  segments) and that $y^*\in S$ (indeed, notice that $y^*(x)=g(x)$
  because $t_n^i\notin \supp(x)$ for every $i$). Define
  $z:T\longrightarrow \mathbb R$ by $z=\sum_{i=1}^n \lambda_i
  e_{t_n^i}$. Notice that $y^*(z)=\sum_{i=1}^n \lambda_i^2=1$ by
  assumptions. Moreover, similar estimates to the ones of
  Lemma~\ref{lemma:weaknullJTinf} prove that $\Vert z\Vert\leq 1$
  in $JT_\infty$. Moreover
  \begin{equation*}
    x^*(z)=\sum_{i=1}^n \lambda_i x^*(e_{t_n^i})<\varepsilon.
  \end{equation*}
  So
  \begin{equation*}
    \Vert y^*-x^*\Vert\geq (y^*-x^*)(z)>1-\varepsilon,
  \end{equation*}
  as desired.
\end{proof}

\begin{remark}
Notice that $\mathcal T(B_\infty)=0$ since the unit ball of $B_\infty$ contains nonempty relatively weakly open subsets of arbitrarily small diameter (in fact, $B_\infty$ has the \textit{convex point of continuity property} \cite[Theorem 2.2]{gms}). Consequently, question \ref{ques: weakopen} remains open.
\end{remark}

\bibliographystyle{plain}

\begin{thebibliography}{BGLPRZ15b}
	
	
	\bibitem[BD73]{NMR2}
	F.~F. Bonsall and J.~Duncan, \emph{Numerical ranges {II}}, London Mathematical
	Society Lecture Note Series 10, Cambridge University Press, 1973.
	
	\bibitem[BLR15a]{MR3334951}
	J.~Becerra~Guerrero, G.~L{\'o}pez-P{\'e}rez, and A.~Rueda~Zoca, \emph{Big
		slices versus big relatively weakly open subsets in {B}anach spaces}, J.
	Math. Anal. Appl. \textbf{428} (2015), no.~2, 855--865. 
	
	\bibitem [BLR15b]{blrjames} J.~Becerra Guerrero, G.~L\'opez-P\'erez, and A.~Rueda Zoca, \textit{Diameter two properties in James spaces}, Banach J. Math. Anal. \textbf{9}, 4 (2015), 203--220.
	
	\bibitem[BLR15c]{MR3281132}
	J.~Becerra~Guerrero, G.~L\'opez-P\'erez, and A.~Rueda~Zoca, \emph{Extreme
		differences between weakly open subsets and convex combinations of slices in
		{B}anach spaces}, Adv. Math. \textbf{269} (2015), 56--70. 
	
	\bibitem[BLR18]{MR3818544}
	J.~Becerra~Guerrero, G.~L\'{o}pez-P\'{e}rez, and A.~Rueda~Zoca, \emph{Diametral
		diameter two properties in {B}anach spaces}, J. Convex Anal. \textbf{25}
	(2018), no.~3, 817--840. 
	
	\bibitem [FHHMZ11]{fab} M.~Fabian, P.~Habala, P.~H\'ajek, V.~Montesinos, and V.~Zizler, \textit{Banach Space Theory}, Springer Science+Business Media, LLC 2011.
	
	\bibitem[GGMS87]{MR912637}
	N.~Ghoussoub, G.~Godefroy, B.~Maurey, and W.~Schachermayer, \emph{Some
		topological and geometrical structures in {B}anach spaces}, Mem. Amer. Math.
	Soc. \textbf{70} (1987), no.~378, iv+116. 
	
	
	\bibitem [GM85]{goma} N.Ghoussoub and B. Maurey. \textit{ G$_\delta$-embeddings in Hilbert space}, J. Func. Anal. \textbf{61} (1)  (1985), 72--97.
	
	\bibitem [GMS87]{gms} N. Ghoussoub, B. Maurey, and W. Schachermayer, \textit{A counterexample to a problem on points of continuity in Banach spaces}, Proc. Amer. Math. Soc. \textbf{99}, 2 (1987), 278--282.
	
	\bibitem [HLN18]{hln} R.~Haller, J.~Langemets, and R.~Nadel, \textit{Stability of average roughness, octahedrality, and strong diameter 2 properties of Banach spaces with respect to absolute sums}, Banach J. Math. Anal. \textbf{12}, 1 (2018), 222--239.
	
	\bibitem[{Iva}06]{Ivakhno}
	Y.~Ivakhno, \emph{{On sets with extremely big slices}}, {J. Math. Phys. Anal.
		Geom.} \textbf{2} (2006), no.~1, 94--103.
		
		\bibitem[LR19]{loru} G.~L\'opez-P\'erez and A.~Rueda Zoca, \textit{Diameter two properties and polyhedrality}, RACSAM \textbf{113} (2019), 131--135.
	
	\bibitem[Rue18]{Rueda}
	A.~Rueda Zoca, \emph{Daugavet property and separability in {B}anach spaces},
	Banach J. Math. Anal. \textbf{12} (2018), no.~1, 68--84. 
	
	\bibitem [RTV19]{rtv} A.~Rueda Zoca, P.~Tradacete, and I.~Villanueva, \textit{Daugavet property in tensor product spaces}, J. Inst. Math. Jussieu (2019), 1-20. doi:10.1017/S147474801900063X.
	
	\bibitem[Shv00]{MR1784413}
	R.~V.~Shvydkoy, \emph{Geometric aspects of the {D}augavet property}, J. Funct.
	Anal. \textbf{176} (2000), no.~2, 198--212. 
	
	\bibitem [Wer01]{wer} D.~Werner, \textit{Recent progress on the Daugavet property}, Ir. Math. Soc. Bull. \textbf{46} (2001), 77--79.
	
	\bibitem[Whi68]{Whitley}
	R.~Whitley, \emph{The size of the unit sphere}, Canad. J. Math. \textbf{20}
	(1968), 450--455. 
	
	\bibitem [Woj92]{woj} P.~Wojtaszczyk, \textit{Some remarks on the Daugavet equation}, Proc. Amer.
    Math. Soc.\textbf{115} (1992), 1047--1052.
	
\end{thebibliography}

\end{document}